\theoremstyle{plain}
\newtheorem{theorem}{Theorem}
\newtheorem{proposition}{Proposition}
\newtheorem{lemma}{Lemma}
\theoremstyle{definition}
\newtheorem{definition}{Definition}
\newtheorem{remark}{Remark}
\newtheorem*{ack}{Acknowledgements}
\begin{document}

\title[Flat surfaces, Bratteli diagrams, and unique ergodicity \`{a} la Masur]
      {Flat surfaces, Bratteli diagrams, and unique ergodicity \`{a} la Masur}
\author{Rodrigo Trevi\~no}
\address{Brooklyn College, City University of New York}
\email{rodrigo@trevino.cat}
\date{\today}

\begin{abstract}
Recalling the construction of a flat surface from a Bratteli diagram, this paper considers the dynamics of the shift map on the space of all bi-infinite Bratteli diagrams as the renormalizing dynamics on a moduli space of flat surfaces of finite area. A criterion of unique ergodicity similar to that of Masur's for flat surface holds: if there is a subsequence of the renormalizing dynamical system which has a good accumulation point, the translation flow or Bratteli-Vershik transformation is uniquely ergodic. Related questions are explored.
\end{abstract}
\maketitle

\section{Introduction and statement of results}
A flat surface is a Riemann surface which has a flat metric in all but finitely points exceptional. When the genus is greater than 2, these exceptional points carry all the negative curvature which is necessary by the Gauss-Bonnet theorem. They are usually called singularities.

The types of dynamical systems studied on flat surface are called \textbf{translation flows}. When the genus of the surface is greater than 2, these systems have zero entropy but are not elliptic: there is a polynomial rate of divergence of trajectories, and thus the dynamics of translation flows on a higher-genus flat surface is not trivial, as is the case of translation flows on a flat torus. The study of the dynamical properties of these systems goes back many decades and is very rich since it has many deep connections to many other fields: polygonal billiards, study of the dynamics of interval exchange transformations, algebraic geometry, combinatorics, Bratteli diagrams, etc.

One of the main tools in the study of translation flows on compact flat surfaces is the \textbf{Teichm\"uller flow}. This flow is a hyperbolic flow on the moduli space of flat surfaces, the space of all confomally equivalent flat surfaces of the same topological type. It is a non-compact, finite-volume orbifold. The orbit of a surface under this flow is the one-parameter family of surfaces obtained from the original surface by a one-parameter family of deformations of the flat metric, thus getting a one-parameter family of flat surfaces. This family is the Teichm\"uller orbit of the surface in moduli space. 

In deforming a surface under the Teichm\"uller deformation, persistent geometric features sometimes have implications for the translation flow on the surface. The prime example of this is \textbf{Masur's criterion}, which states that if the translation flow on a flat surface is not uniquely ergodic, then the Teichm\"uller orbit of that surface is divergent, that is, it leaves every compact set of the moduli space. Another way of stating this is that if, as the surface is deformed under the Teichm\"uller deformation, there is a subsequence of surfaces which converge to a surface in moduli space, then the translation flow is uniquely ergodic. It is this idea of Masur which motivates this paper.

Another of the main motivations for this paper is the study of translation flows on surfaces of infinite genus. There has been significant developments in this field in the past 10 years. The main challenge in the study of flat surfaces of infinite genus has been the lack of moduli spaces. For compact surfaces, the moduli spaces of flat surfaces and different dynamical systems defined on them have served very well in helping understand relationships between the dynamics of translation flows and geometry of flat surfaces. Since no such spaces exist for surfaces of infinite genus, different authors have tackled this obstacle in different ways, for example by using Veech groups (e.g. \cite{Hinf,HW:erg, rodrigo:ergodicity, HT:evil}), spaces of flat structures (e.g. \cite{HooperImmersions1,HooperImmersions2}), covers (e.g. \cite{HLT:Ehrenfest, RT:ext, DHL:wind-tree,FU:nonerg}), to name a few.

I have been mostly interested in studying flat surfaces of infinite genus and finite area, and in this paper I only consider surfaces of finite area. The starting point for this paper is the joint work \cite{LT}, where a way of constructing flat surfaces from Bratteli diagrams was introduced, generalizing a point of view previously introduced by Bufetov \cite{bufetov:limitVershik, bufetov:limit}. Given a special type of infinite indexed graph, called a bi-infinite Bratteli diagram $\mathcal{B}$, along with some other structures called weight functions $w^+$ and $w^-$ (and collectively denoted by $w^\pm$), and orderings $\leq_{r,s}$ (all of this defined in \S \ref{sec:bratteli}), we can construct a unique flat surface $S(\mathcal{B},w^\pm,\leq_{r,s})$ whose dynamical and geometric properties are linked to the dynamical properties of $\mathcal{B}$. Starting from a bi-infinite Bratteli diagram $\mathcal{B}$ many surfaces can be constructed from it by choosing different weight functions $w^\pm$ as well as different orderings $\leq_{r,s}$.

By dynamical properties of $\mathcal{B}$ I mean the following: the bi-infinite Bratteli diagram $\mathcal{B}$ along with $\leq_{r,s}$ define two \textbf{Bratteli-Vershik transformations} on the space of infinite positive paths and infinite negative paths, respectively, of the Bratteli diagram $\mathcal{B}$ (this is explained in \S \ref{sec:bratteli}). The dynamics of these two transformations are intricately related to the dynamics of the vertical and horizontal translation flows on the flat surface constructed from $\mathcal{B}$ and $\leq_{r,s}$. In particular, the ergodicity of the Bratteli-Vershik transformations is equivalent to the ergodicity of the flows. One of the goals of \cite{LT} was to exploit this relationship in order to obtain results about the dynamics of Bratteli-Vershik transformations through the study of translation flows, and vice-versa. It should be mentioned that the surfaces obtained through this construction can have both finite and infinite genus. In fact, it is believed that a typical surface from this construction has infinite genus. For the reader who is unfamiliar with the dictionary developed in \cite{LT}, Appendix \ref{sec:fib} consists of an explicit example of how one starts with a Bratteli daigram, builds a surface from it, and how the renormalization operation works.

One of the key features of the relationship between Bratteli diagrams and flat surfaces constructed from them is the functoriality property proved in \cite[Proposition 6.3]{LT} and stated here as Proposition \ref{prop:shift} (a version of it has been relied on already by Bufetov \cite{bufetov:limit, bufetov:limitVershik}). This merely illustrates that the Teichm\"uller deformation of a flat surface constructed from a Bratteli diagram is manifested by the shifting indices on the Bratteli diagram and then constructing a flat surface from it. This is described in detail in \S \ref{subsec:renorm}.

In this paper I study the dynamics of the shift map $\sigma:\beth\rightarrow \beth$ on the space of all bi-infinite Bratteli diagrams $\beth$. Let $\mathfrak{M}$ be the set of all matrices with non-negative integer entries. Since it is countable, we can code this set and consider the set $\beth$ of all bi-infinite Bratteli diagrams as a subshift of the full shift on $\mathfrak{M}^\mathbb{Z}$. This is a countable state Markov shift of infinite topological entropy (it contains the full $n$-shift for all $n$). We topologize the set $\beth$ by cylinder sets.

By the functoriality property, we can think of $\beth$ as a ``moduli space'' of flat surfaces of finite area. Not all flat surfaces of finite area can be obtained through constructions from elements in $\beth$ (e.g. the open unit disk), but most of the ones with interesting dynamics can, and the ones with recurrent horizontal and vertical flows can. Thus the space $\beth$ serves as a good candidate for a space where the renormalization dynamics coming from the shift map can yield information about the translation flow of flat surfaces constructed from diagrams $\mathcal{B}\in\beth$. In particular, since we are treating $\beth$ as a moduli space, we would like to know whether something like Masur's criterion holds in this space. The purpose of this paper is to prove such a result and explore related ones.

\begin{theorem}
\label{thm:main}
Let $\mathcal{B}\in\beth$ be a Bratteli diagram. Suppose that the orbit of $\mathcal{B}\in\beth$ under the shift map $\sigma:\beth \rightarrow\beth$ has an accumulation point $\mathcal{B}^*\in \beth$. Then:
\begin{enumerate}
\item if the positive part of $\mathcal{B}^*$ is minimal, then the tail equivalence relation on the positive part of $\mathcal{B}$, and hence any Bratteli-Vershik transformation defined on it, admits a unique invariant probability measure. This gives a unique weight function $w^+$ so that the vertical flow on $S(\mathcal{B},w^\pm, \leq_{r,s})$ is uniquely ergodic \textbf{for any choice} of $w^-$ and $\leq_{r,s}$.
\item convergence along a subsequence is otherwise not sufficient for ergodicity: there is a Bratteli diagram $\mathcal{B}$ whose positive part is transitive with $\sigma^{n_k}(\mathcal{B})\rightarrow \mathcal{B}^*$ where $\mathcal{B}^*$ has a transitive positive part which admits a unique non-atomic probability invariant measure but $\mathcal{B}$ has a positive part which does not admit any non-atomic invariant probability measure. Moreover, the vertical flow on $S(\mathcal{B}^*,w^\pm, \leq_{r,s})$ is uniquely ergodic \textbf{for any choice} of $w^-$ and $\leq_{r,s}$.
\end{enumerate}
\end{theorem}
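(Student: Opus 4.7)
The plan is to follow the standard Bratteli-diagram analog of the cone-contraction arguments behind the Veech and Boshernitzan criteria. For part (i), tail-invariant probability measures on the positive part of $\mathcal{B}$ correspond bijectively to coherent sequences $(\mu_n)_{n\ge 0}$ of nonnegative vectors satisfying $\mu_n = A_n^T \mu_{n+1}$ (suitably normalized against tower heights), so the set of such measures is the nested intersection $\bigcap_N (A_0 A_1 \cdots A_{N-1})^T \mathbb{R}_+^{V_N}$, and unique ergodicity amounts to this intersection being a single ray.

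First I would unpack what minimality of the positive part of $\mathcal{B}^*$ buys us: after telescoping finitely many consecutive levels, we may assume there is a $k$ such that $A_0^* A_1^* \cdots A_k^*$ has all entries strictly positive. The convergence $\sigma^{n_j}(\mathcal{B}) \to \mathcal{B}^*$ in the cylinder topology then means that for all large $j$ the first $k+1$ transition matrices of $\sigma^{n_j}(\mathcal{B})$ agree on the nose with those of $\mathcal{B}^*$, so the blocks $A_{n_j} A_{n_j+1} \cdots A_{n_j + k}$ in the original diagram are strictly positive for infinitely many $j$. By Birkhoff's theorem, each such strictly positive block is a strict contraction of the positive cone in the Hilbert projective metric, and chaining infinitely many such contractions collapses the nested intersection to a single ray. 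This ray is the unique $w^+$; passing to the surface via the functoriality of Proposition \ref{prop:shift} and the measure correspondence in \cite{LT} gives unique ergodicity of the vertical flow. Since that flow depends only on the positive side of $\mathcal{B}$ and on $w^+$, the choices of $w^-$ and $\leq_{r,s}$ drop out.

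For part (ii), I would construct a counterexample by interleaving. Fix a model $\mathcal{B}^*$ whose positive part is transitive but not minimal and carries a unique non-atomic invariant probability measure; a concrete source is a primitive substitution-type diagram with one extra absorbing vertex sitting outside its minimal component. Then build $\mathcal{B}$ by alternating ``good'' blocks on widely separated intervals $[n_j, n_j + L_j]$, along which the transition matrices literally copy $L_j + 1$ consecutive levels of $\mathcal{B}^*$ (forcing $\sigma^{n_j}(\mathcal{B}) \to \mathcal{B}^*$ as $n_j, L_j \to \infty$), with funnel blocks in between whose transition matrices push almost all flow through a single dedicated vertex so that any non-atomic candidate measure loses a fixed proportion of mass at each funnel. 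A handful of strategically placed back-edges in each funnel preserve transitivity of the positive part without creating a new minimal component.

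The main obstacle lies entirely in part (ii). Verifying (i) is essentially formal once the Birkhoff cone-contraction lemma is in hand. In (ii) one must simultaneously (a) construct funnel blocks that provably annihilate every non-atomic tail-invariant probability measure; (b) keep the positive part transitive without making it minimal; and (c) leave the good blocks unaltered so that $\mathcal{B}^*$ really is the cylinder limit along $\{n_j\}$. Item (a) requires a quantitative Perron--Frobenius-type estimate controlling how much mass the funnels absorb relative to the non-atomic part, and this quantitative balance, together with the combinatorial care needed to preserve transitivity, is where the real work goes.
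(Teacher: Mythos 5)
For part (i) you take a genuinely different route from the paper, and it is precisely the ``Perron--Frobenius approach'' that the introduction raises as a question. The paper argues geometrically: it extracts limiting weights $w^\pm_*$ by a diagonal compactness argument, uses the metamour function to control the connectivity constant $\delta_t$, verifies the hypotheses of the Masur-type criterion (Theorem \ref{thm:integrability}) along the times $t_{\kappa_n}$ to get ergodicity, and then upgrades to unique ergodicity by Masur's trick of converting a putative second invariant measure into a new flat structure $\alpha_s$ and showing the convex combination $\mu_s$ would itself be ergodic. Your cone-contraction argument instead proves unique ergodicity of the tail equivalence relation directly: minimality of the positive part of $\mathcal{B}^*$ yields a strictly positive finite product $A_0^*\cdots A_k^*$, cylinder convergence plants disjoint copies of this fixed positive block in $\mathcal{B}$ infinitely often, and Birkhoff contraction on those blocks (with non-expansiveness of the intervening nonnegative blocks) collapses the nested cones to a ray. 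This is viable and shorter for the measure-theoretic statement; two things need saying explicitly, though. First, the step ``minimal $\Rightarrow$ some finite product is strictly positive'' uses minimality in the sense of simplicity (every tail class dense), whereas the paper defines minimal as ``transitive with no periodic component,'' which is a priori weaker, so you should justify or declare which notion you use. Second, the conclusion about the vertical flow for \textbf{any} $w^-$ and $\leq_{r,s}$ is not because ``the flow depends only on the positive side'' (it does not: $w^-$ is the roof and $\leq_{r,s}$ gives the identifications), but because the flow is a suspension with bounded roof over the adic system, so its invariant probability measures biject with those of the base; that standard observation should be included.

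Part (ii) is where your proposal has a genuine gap, which you have located but not closed. As described, the ``funnel'' mechanism is suspect: tail-invariant measures satisfy $\mu_k = F_{k+1}^T\mu_{k+1}$ and conserve the height-weighted total mass, so a block cannot make a non-atomic candidate ``lose a fixed proportion of mass''; it can only force mass onto an atomic (periodic) component, or force the cone recursion to exit the positive cone or to have infinite mass. The paper's example does the latter concretely: $\mathcal{B}^*$ is the Chacon diagram (matrix $M(3,1)$ at every level), and $\mathcal{B}(\bar n)$ has $M(p,n_i)$ at the sparse positions $(i+1)^2-1$ and $M(p,1)$ elsewhere, so that along $k_i=i(i+1)$ the windows see only $M(p,1)$ and $\sigma^{k_i}(\mathcal{B})\to\mathcal{B}^*$, while for $n_i=p^{(i+1)^2-1}$ the weight recursion forces any non-atomic invariant weight function to be infinite, leaving only the atomic measure on the exceptional rightmost path. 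You would need to carry out a quantitative estimate of exactly this kind for your funnels, while simultaneously checking transitivity (note that ``back-edges'' do not exist in a Bratteli diagram, so transitivity must be arranged with the forward edge structure alone) and the two-sided cylinder convergence in $\beth$, which also constrains the levels below $n_j$. You also leave unaddressed the final claim of (ii), that the vertical flow on $S(\mathcal{B}^*,w^\pm,\leq_{r,s})$ itself is uniquely ergodic for every choice, which in the paper rests on the unique ergodicity of the Chacon system.
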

In other words, if the shift orbit of $\mathcal{B}$ has a ``good'' accumulation point then \textbf{any} flat surface constructed from it will have a uniquely ergodic vertical flow.
\begin{remark}
If the shift orbit does not have an accumulation point in $\beth$ then it may still be the case that the associated systems are uniquely ergodic. See Theorems 8.11 and 8.12 in \cite{LT}, for criteria of these type. These are systems given by Bratteli diagrams whose shift orbit ``diverges'', i.e., leaves every compact set of $\beth$, but does so ``slowly''.
\end{remark}
As pointed out in \cite{LT}, there already exist criteria for unique ergodicity in the literature (e.g. \cite{fisher,FFT,BKM09,BKMS2, FrankSadun:fusion}) for Bratteli-Vershik transformations defined through Bratteli diagrams. All of these criteria are from the Perron-Frobenius point of view. Theorem \ref{thm:main} adds to this list, but does so through the connection with flat surfaces. It would be interesting to see whether Theorem 1 could be proved using the Perron-Frobenius approach. The renormalization dynamics behind Theorem \ref{thm:main} are a generalization of Rauzy-Veech induction \cite{rauzy:CFE, rauzy:IET, veech:gauss}. Indeed, for a Bratteli diagram giving a translation surface of finite type, the shift dynamics $\sigma:\beth\rightarrow\beth$ coincide with Rauzy-Veech induction. This is the spirit of the renormalization scheme in \cite{bufetov:limitVershik, bufetov:limit}.

There are other parallels between properties of $\beth$ and those of moduli spaces of compact surfaces: the set of diagrams $\mathcal{B}$ corresponding to periodic surfaces is dense in $\beth$ (Proposition \ref{prop:density}) and so is the set of diagrams which give surfaces admitting pseudo-Anosov diffeomorphisms (Proposition \ref{prop:density2}). Moreover, the generic diagram gives a minimal and uniquely ergodic surface, i.e., the set of diagrams giving surfaces with those properties is residual (Proposition \ref{prop:residual}). 

There is a lot more to be done: the moduli space $\beth$ considered here is the coarsest imaginable and, besides a topology, it does not have much structure. For example, it is not clear whether $\beth$ is stratified in a way where different strata contain diagrams corresponding to surfaces with different topology (besides the strata coming from moduli spaces of compact surfaces), how the choices of orders $\leq_{r,s}$ can be incorporated into the moduli space, whether there is a ``natural'' shift-invariant probability measure on $\beth$ analogous to the Masur-Veech measures defined on moduli spaces of compact flat surfaces (or any other interesting shift-invariant probability measures), and what is the measure of diagrams with minimal positive parts. There are also some flat surfaces of finite area of interest, such that that coming from the infinite step billiard \cite{infinite-step}, which could only be constructed from Bratteli diagrams with countably infinite vertices at some level, which is outside the realm of the types of diagrams considered here. I hope to explore these questions in the future.

\begin{ack}
John Smillie was the first person to suggest to me that one should study the space of all Bratteli diagrams and the shift dynamics on it. The idea for this paper began with conversations with him, for which I am grateful. I also had many useful conversations with several people about the topics addressed in this paper, and I am grateful to all of them: Kathryn Lindsey, Barak Weiss, Pat Hooper, Boris Solomyak, Jon Aaronson, and Giovanni Forni. This work was partially supported by the NSF under Award No. DMS-1204008. Most of this paper was written while participating at ICERM's ``Dimension and Dynamics'' thematic semester. I am grateful for their support. 
\end{ack}
\section{Bratteli diagrams}
\label{sec:bratteli}

\begin{definition}
A \textbf{Bratteli diagram} is an infinite directed graph $B = (V,E)$ with
$$V = \bigsqcup_{k\in \mathbb{N}\cup \{0\}}V_k\hspace{.5in}  \mbox{ and } \hspace{.5in} E = \bigsqcup_{k\in \mathbb{N}}E_k$$
and maps $r,s:E\rightarrow V$ satisfying $r(E_k) = V_{k}$ and $s(E_k) = V_{k-1}$ for all $k\in\mathbb{N}$.
\end{definition}
We always assume that $|V_k|$ and $|E_k|$ are finite for all $k$. We may encode a Bratteli diagram through an infinite sequence of matrices $\{F_i\}_{i\in\mathbb{N}}$. The matrix $F_i = f^k_{v,w}$ is $|V_k|\times |V_{k-1}|$ and is given by $f^k_{v,w} = |\{e\in E_k: r(e) = v\mbox{ and } s(e) = w\}|$.

We denote by $X_B$ the set of all infinite paths starting at $V_0$:
$$X_B := \{ (e_1,e_2,\dots): e_i\in E_i,\,\, r(e_i) = s(e_{i+1})\mbox{ for all }i  \}.$$
The space is given the topology generated by clopen cylinder sets: given a finite collection $\bar{e} = (e_1,e_2,\dots, e_n)$, the cylinder set $C(\bar{e})$ is defined by
$$C(\bar{e}) = \{ (f_1,f_2,\dots ) \in X_B:  f_i = e_i \mbox{ for all }i\in\{1,\dots , n\}  \}.$$

Let $\leq_r$ be a partial order on each set $r^{-1}(v)$ for any $v\in V$. We call the pair $(B,\leq_r)$ an \textbf{ordered Bratteli diagramn}. For any $v\in V_k$ we denote by $S(v)$ to be the finite set
$$S(v) = \{(e_i,e_2,\dots,e_k ) :e_i\in E_i,\,\, r(e_i)=s(e_{i+1})\mbox{ for all $i\in\{1,\dots k\}$ and }  r(e_k) = v\}.$$
Note that $\leq_r$ gives an order to every set $S(v)$. A path $(e_1,e_2,\dots)$ is a \textbf{maximal path} if every $e_i$ is the maximal in $r^{-1}( r(e_i))$. We denote the set of maximal paths in $X_B$ by $X_{B}^+$. A path $(e_1,e_2,\dots)$ is a \textbf{minimal path} if every $e_i$ is the minimal in $r^{-1}( r(e_i))$. We denote the set of minimal paths in $X_B$ by $X_B^-$. Note that this depends on $\leq_r$.

\begin{definition}[Tail equivalence relation]
Let $Y\subset X_B$. The \textbf{tail equivalence relation} in $Y$ is the relation $\bar{e} \sim \bar{f}$, for $e,f\in Y$, if and only if there is a $k\in \mathbb{N}$ such that $e_i = f_i$ for all $i>k$.
\end{definition}
Note that a tail equivalence class which is finite has a unique minimal element and a unique maximal element. Let $[\bar{e}_1]$ be such a class and let $P(\bar{e}_1) = \{\bar{e}_1,\dots, \bar{e}_N\}$ be the elements in this class. In this case we have that the closure $\overline{P(\bar{e}_1)}$ in $Y$ is $P(\bar{e}_1)$. The union of elements making up a finite tail equivalence class will be refered to as a \textbf{periodic component}.

Tail equivalence classes which are infinite correspond to minimal components: let $[\bar{e}_1]$ be an infinite tail equivalence class and denote by $M(\bar{e}_1) = \{\bar{e}_1,\bar{e}_2,\dots\}$ the elements of this class. The closure $\overline{M(\bar{e}_1)}$ of $M(\bar{e}_1)$ in $X_B$ is a \textbf{minimal component}. Note that the definitions for periodic and minimal components do not depend on a choice of order $\leq_r$, but only on the tail equivalence relation.
\begin{definition}
Let $B$ be a Bratteli diagram and suppose that there exists a path $\bar{e}\in X_B$ such that $\overline{[\bar{e}]} = X_B$. Then $B$ and its tail equivalence relation are called \textbf{transitive}. If it is transitive and there is no periodic component, then $B$ and its tail equivalence relation are called \textbf{minimal}.
\end{definition}
\begin{remark}
An example of a Bratteli diagram which is transitive and not minimal is the Chacon middle third transformation: it contains a periodic component consisting of a single infinite path $\bar{e} = [\bar{e}]$. However, for any other path $\bar{e}^* \in X_B$ we have that $\overline{[\bar{e}^*]} = X_B$. This example will come up in \S \ref{subsec:example} (see also \cite[\S 6.2]{LT}).
\end{remark}
Let $\bar{e}= (e_1,e_2,\dots)\in X_B$ and $\bar{e}\not\in X_B^+$. Denote by $\ell(\bar{e})$ the smallest index $k$ such that $(e_1,e_2,\dots e_k)$ is non-maximal in $S(v)$, where $v = r(e_k)$. The \textbf{successor} of $\bar{e}\in X_B$ is the path $\bar{f} = (f_1,f_2,\dots)$ satisfying $f_i = e_i$ for all $i>\ell(\bar{e})$ and $(f_1,f_2,\dots ,f_{\ell(\bar{e})})$ is the successor of $(e_1,e_2,\dots ,e_{\ell(\bar{e})})$ in $S(v_{\ell(\bar{e})})$. This depends on $\leq_r$. For $\bar{e}\in X_B$ but $\bar{e}\not\in X_B^-$, the \textbf{predecessor} of $\bar{e}$ is the element $\bar{f}\in X_B$ such that $\bar{e}$ is the successor of $\bar{f}$. This also depends on $\leq_r$. We denote by $\rho_r^+(\bar{e})$ and $\rho_r^-(\bar{e})$ the successor and predecessor, respectively, of $\bar{e}$ and, recursively, $\rho^{j\pm 1}_r(\bar{e})$ is the successor/predecessor of $\rho^{j}_r(\bar{e})$. Let
$$\mathcal{X}_{B,r}^+ = X_B \backslash \bigcup_{k\in\mathbb{N}\cup\{0\}} \rho^{-k}_r (X_B^+).$$
\begin{definition}
Let $(B,\leq_r)$ be an ordered Bratteli diagram. The successor map $\rho_r^+:\bar{e}\mapsto \rho_r^+(\bar{e})$ defines the \textbf{Bratteli-Vershik} or \textbf{adic} transformation $\rho_r^+:\mathcal{X}_{B,r}^+\rightarrow X_B$.
\end{definition}
The transformation can be extended to all elements of periodic components: we declare that the successor of a maximal element in a periodic component is the minimal element of the periodic component. As such, the transformation restricted to a periodic component is periodic, and when we refer to a Bratteli-Vershik transformation we will always assume that it is also defined on a periodic component of $X_B$, where the tranformation depends on $\leq_r$.

If $|X_B^+| = |X_B^-| <\infty$, then we can extend the adic transformation to all of $X_B$ by choosing a bijection between $X_B^-$ and $X_B^+$ (respecting periodic components) and declaring the successor of $\bar{e}\in X^+_B$ to be the element in $X^-_B$ defined by this bijection. 
\begin{remark}
Note that if $B$ is transitive, then any adic transformation defined on $X_B$ will be trannsitive: there will be many orbits which are dense in $X_B$. If the tail equivalence relation is minimal, then any Bratteli-Vershik transformation will be minimal.
\end{remark}
We now recall the definition of a weight function \cite{LT}.
\begin{definition}
A \textbf{weight function} for a Bratteli diagram $B = (V,E)$ is a map $w:V_0\cup E\rightarrow [0,\infty)$ such that
\begin{enumerate}
\item for any $v\in V$ and two finite paths $(e_1,\dots, e_n)$ and $(f_1,\dots, f_n)$ with $r(e_n) = r(f_n)$ we have that
  $$w(s(e_1))\cdot \prod_{i=1}^nw(e_i) = w(s(f_1))\cdot \prod_{i=1}^nw(f_i),$$
\item for any $v\in V$
  $$\sum_{e\in s^{-1}(v)} w(e) = 1,$$
\item for any $\bar{e} = (e_1,e_2,\dots)\in X_B$ which does not belong to a periodic tail-equivalence class, we have that
  $$ \lim_{n\rightarrow \infty}w(s(e_1)) \cdot \prod_{i=1}^n w(e_i) = 0.$$
\end{enumerate}
It is a \textbf{positive weight function} if $w$ only takes values in $(0,\infty)$. It is a \textbf{finite} weight function if $\sum_{v\in V_0} w(v) <\infty$. It is a \textbf{probability} weight function if $\sum_{v\in V_0} w(v) = 1$. 
\end{definition}
If $w$ is a weight function for $B = (V,E)$, we can extend it to any $v\in V_k$ as follows. For any $v\in V_k$, $k\in\mathbb{N}$, let $\bar{e}=(e_1,e_2,\dots)$ be a path such that $r(e_k) = v$. Define
$$w(v) := w(s(e_1))\cdot\prod _{i=1}^k w(e_i).$$
This is independent of the path $(e_1,\dots, e_k)$ taken with $r(e_k) = v$ by (i) in the definition of a weight function.
\begin{remark}
\label{rem:measures}
A probability weight function defines a probability measure on $X_B$ which is invariant under the tail-equivalence relation. From this it follows that such measure is invariant for any adic transformation defined on $X_B$. Conversely, every invariant probability measure for an adic transformation on $X_B$ defines a probability weight function on $B$. 
\end{remark}
A \textbf{weighted} Bratteli diagram is a Bratteli diagram with a weight function, which we denote $(B,w)$. The following is standard (see \cite[Proposition 4.14]{LT}).
\begin{lemma}
\label{lem:existence}
Every Bratteli diagram $B$ admits a probability weight function.
\end{lemma}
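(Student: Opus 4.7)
The plan is to produce a tail-equivalence-invariant Borel probability measure $\mu$ on the compact metrizable space $X_B$ and read off the weight function from $\mu$ via Remark \ref{rem:measures}. The argument is driven by weak-$*$ compactness of $\mathcal{P}(X_B)$, which holds because $X_B$ sits as a closed subset of the compact product $\prod_k E_k$ with each $E_k$ finite.

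The first step is to approximate by measures invariant only under \emph{level-$N$} tail equivalence. For each $N$, let $\Pi_N$ denote the finite set of length-$N$ paths from $V_0$ to $V_N$; choose once and for all an extension $\bar{p}\mapsto \tilde{p}\in X_B$, and set $\mu_N = |\Pi_N|^{-1}\sum_{\bar{p}\in\Pi_N}\delta_{\tilde{p}}$. A short count gives $\mu_N(C(e_1,\ldots,e_k))=m(r(e_k),N)/|\Pi_N|$ for $k\le N$, where $m(v,N)$ is the number of length-$(N-k)$ paths from $v$ to $V_N$; in particular the measure of such a cylinder depends only on $r(e_k)$. Extract a weak-$*$ subsequential limit $\mu_{N_j}\rightharpoonup \mu$; because cylinder sets are clopen the level-$k$ invariance passes to $\mu$ for every $k$, so $\mu$ is fully tail-invariant.

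I then define $w(v):=\mu(C(e_1,\ldots,e_k))$ for any path ending at $v\in V_k$ (well defined by tail-invariance; for $v\in V_0$ this is $\mu(\{\bar{e}:s(e_1)=v\})$), and $w(e):=w(r(e))/w(s(e))$ whenever $w(s(e))>0$; on edges out of zero-weight vertices I declare $w(e)$ to be an arbitrary probability distribution on $s^{-1}(v)\cap E_k$, a choice invisible to $\mu$. Axiom (i) follows by telescoping when the base vertex carries positive mass and because zero-weight vertices sit inside zero-measure cylinders otherwise (forcing both sides of (i) to vanish); axiom (ii) is the cylinder decomposition $C(e_1,\ldots,e_{k-1})=\bigsqcup_{e_k} C(e_1,\ldots,e_k)$ combined with the definition of $w(e)$; the probability normalization $\sum_{v\in V_0} w(v)=\mu(X_B)=1$ is immediate.

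The content of the lemma is axiom (iii). For $\bar{e}\in X_B$ outside every periodic tail class, $w(s(e_1))\prod_{i=1}^n w(e_i)=w(r(e_n))=\mu(C(e_1,\ldots,e_n))$ decreases to $\mu(\{\bar{e}\})$ as $n\to\infty$. Tail-invariance forces $\mu(\{\bar{e}'\})=\mu(\{\bar{e}\})$ for every $\bar{e}'$ in the tail class $[\bar{e}]$, and since $[\bar{e}]$ is infinite by hypothesis while $\mu$ is a probability measure, countable additivity forces the common value to be $0$. No further structure of $\mu$ enters, and I expect this vanishing-of-singleton-mass step, rather than the weak-$*$ limit construction, to be the conceptual heart of the argument.
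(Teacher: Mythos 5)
Your proof is correct and complete. The paper itself gives no argument for Lemma \ref{lem:existence}, deferring to \cite{LT} (Proposition 4.14), so there is nothing line-by-line to compare against; the standard route there is a finite-dimensional compactness argument on the inverse system of weight vectors at each level, and your construction is the measure-theoretic counterpart: weak-$*$ compactness of $\mathcal{P}(X_B)$ applied to the uniform measures on level-$N$ paths, which yields a measure whose cylinder masses depend only on the terminal vertex, i.e.\ exactly a tail-invariant measure in the sense of Remark \ref{rem:measures}. The two points worth keeping explicit in a write-up are the ones you already flag: (a) the bookkeeping around zero-weight vertices for axioms (i)--(ii) works because $w(v)=\sum_{e\in s^{-1}(v)}w(r(e))$ forces all descendants of a zero-weight vertex to have zero weight, so whenever one side of (i) vanishes the common terminal vertex has weight zero and both sides vanish; and (b) axiom (iii) is precisely the statement that $\mu$ assigns zero mass to each singleton in an infinite tail class, which follows from tail-invariance of singleton masses plus countable additivity. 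Your identification of (b) as the actual content of the lemma is accurate, since the paper's definition of weight function builds condition (iii) in by hand and it is the only axiom not automatic from invariance and normalization.
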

\begin{remark}
In \S \ref{subsec:example} we construct examples of transitive diagrams which admit no non-atomic invariant probability measure but admit only an atomic ones.
\end{remark}
\begin{definition}
  A \textbf{fully ordered} Bratteli diagram is an ordered Bratteli diagram $(B,\leq_r)$ along with a partial order $\leq_s$ on $E\cup V$ so that any two edges $e,e'$ are comparable under $\leq_s$ if and only if $s(e) = s(e')$, $\leq_s$ is a total order on $V_k$, for any $k\geq 0$, and edges are not comparable with vertices. We denote fully ordered Bratteli diagrams by $(B,\leq_{r,s})$.
\end{definition}

\subsection{Bi-infinite diagrams}
\label{subsec:bi-inf}
We now review bi-infinite Bratteli diagrams, which will be the main actors in this play.
\begin{definition}
A \textbf{bi-infinite} Bratteli diagram is an infinite graph $\mathcal{B} = (\mathcal{V},\mathcal{E})$ which are grouped into disjoint sets
$$\mathcal{V} = \bigsqcup_{k\in \mathbb{Z}} \mathcal{V}_k\hspace{1in}\mbox{ and } \hspace{1in} \mathcal{E} = \bigsqcup_{k\in \mathbb{Z}\backslash \{0\}} \mathcal{E}_k$$
with associated range and source maps $r,s:\mathcal{E}\rightarrow \mathcal{V}$ such that $s(\mathcal{E}_k) = \mathcal{V}_{k-1}$ and $r(\mathcal{E}_k) = \mathcal{V}_{k}$ for $k\in\mathbb{N}$ and $s(\mathcal{E}_k) = \mathcal{V}_{k}$ and $r(\mathcal{E}_k) = \mathcal{V}_{k+1}$ for $k<0$.
\end{definition}
Following the convention in \cite{LT}, we will use calligraphic letters $\mathcal{B},\mathcal{E},\mathcal{V}, \mathcal{F}$ to denote bi-infinite diagrams and keep the uppercase letters $B,E,V,F$ for infinite diagrams defined defined in the previous section.

For $k<l$ We denote by $\mathcal{E}_{k,l}$ the set of paths from $\mathcal{V}_k$ to $\mathcal{V}_l$, that is,
$$\mathcal{E}_{k,l} = \{(e_{k+1},\dots, e_l)\in \mathcal{E}_{k+1}\times\cdots\times \mathcal{E}_l: r(e_i) = s(e_{i+1})\mbox{ for all } i \in \{k+1,\dots,l-1\} \}.$$

\begin{wrapfigure}{r}{0.25\textwidth}
  \centering
  \includegraphics[width=0.25\textwidth]{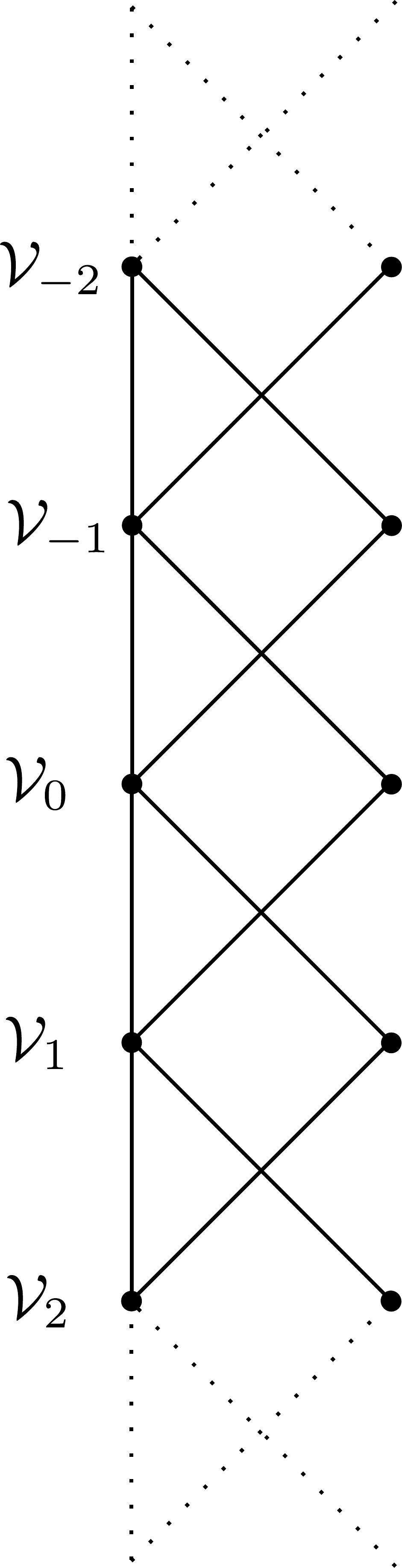}
\end{wrapfigure}

Given two Bratteli diagrams $B^- = (V^-,E^-)$ and $B^+ = (V^+,E^+)$ with $|V_0^-| = |V^+_0|$ we can create a bi-infinite Bratteli diagram $\mathcal{B}$ by \textbf{welding}: given a bijection $\beta:V_0^- \rightarrow V_0^+$, we identify $v\in \mathcal{V}_0$ with $v\in V_0^-$ and $\beta(v)\in V_0^+$, $\mathcal{E}_k^+$ and $\mathcal{V}_k^+$ with $E_k$ and $V_k$, respectively, for $k\in\mathbb{N}$ as well as $\mathcal{V}_{-k}$ and $\mathcal{E}_{-k}$ with $V_k^-$ and $E_k^-$, respectively. Welding is a way of combining two Bratteli diagrams $B^\pm$ to give a bi-infinite Bratteli diagram $\mathcal{B}$. Equivalently, the transition matrices $\{\mathcal{F}_k\}_{k\in\mathbb{Z}\backslash\{0\}}$ are given by $\mathcal{F}_{k} = F_k^+$ and $\mathcal{F}_{-k} = (F_k^-)^T$ for $k\in\mathbb{N}$.

The \textbf{positive part} of a bi-infinite Bratteli diagram $\mathcal{B} = (\mathcal{V},\mathcal{E})$ is the Bratteli diagram $B^+ = (V^+,E^+)$ where $E^+_k$ and $V_k^+$ are identified, respectively, with $\mathcal{V}_k$ and $\mathcal{E}_k$ for non-negative indices $k$. The \textbf{negative part} $B^-$ of $\mathcal{B}$ is similarly defined.

A \textbf{fully ordered} bi-infinite Bratteli diagram $(\mathcal{B},\leq_{r,s})$ is a bi-infinite Bratteli diagram with partial orders $\leq_r$ and $\leq_s$ defined as in the previous section. Note that when welding fully ordered Bratteli diagrams $(B^\pm, \leq_{r,s}^\pm)$ to produce a fully ordered bi-infinite Bratteli diagram $(\mathcal{B},\leq_{r,s})$ the $\leq_r^-$ orders on $B^-$ become the $\leq_s$ orders for the negative indices of $\mathcal{B}$. Likewise, $\leq_s^-$ becomes $\leq_r$ for negative indices after welding.

The space of infinite paths on $\mathcal{B}$ will be denoted by $X_\mathcal{B}$. It can be thought of as a subset of the product space of the space of paths of the positive and negative parts of $\mathcal{B}$:
$$X_\mathcal{B}  = \left\{ (\bar{e}^-,\bar{e}^+)\in X_{B^-}\times X_{B^+} : s(e_1^-) = s(e_1^+) \right\},$$
where $B^-$ and $B^+$ represent the negative and positive parts of $\mathcal{B}$, respectively.

A weighted bi-infinite Bratteli diagram $(\mathcal{B},w^\pm)$ is a bi-infinite Bratteli diagram along with two weight functions $w^+$ and $w^-$ for the negative and positive parts of $\mathcal{B}$, respectively. In these cases, we will always assume that $w^+$ is a probability weight function and that $w^-$ is finite and normalized such that
\begin{equation}
\label{eqn:probWeight}
\sum_{v\in\mathcal{V}_0} w^-(v)\cdot w^+(v) = 1.
\end{equation}
A \textbf{weighted, fully ordered, bi-infinite} Bratteli diagram is denoted by $(\mathcal{B},\leq_{r,s},w^\pm)$.
\subsection{The space of Bratteli diagrams}
\label{subsec:space}
Let $\mathfrak{M}_{m,n}$ be the set of all $m\times n$ matrices whose entries are in $\mathbb{N}\cup \{0\}$, where $m,n \in \mathbb{N}$. Let
$$\mathfrak{M} = \bigcup_{m,n} \mathfrak{M}_{m,n}$$
be the set of all non-negative integer valued matrices. Let 
$$\beth^+ \subset \mathfrak{M}^\mathbb{N}$$
be the set of all Bratteli diagrams $B$ identified as infinite paths in $\mathfrak{M}^\mathbb{N}$ with the restriction that if $B=(F_1,F_2,\dots) \in \beth^+$ with $F_k \in \mathfrak{M}_{m_k,n_k}$, then $m_i = n_{i+1}$ for all $i>0$. The set $\beth^+$ is given a topology by giving a basis of the topology of open cylinder sets
$$\mathfrak{U}(F_i,\dots, F_j) = \{B = (M_1,M_2,\dots) \in \beth^+ : M_k = F_k \mbox{ for all }i\leq k \leq j\}.$$
Since the main players are bi-infinite Bratteli diagrams $\mathcal{B}$ we now define an approriate space for them. Let 
\begin{equation}
\label{eqn:space}
\beth = \{(B^+,B^-) \in \beth^+ \times \beth^+ : \mbox{ if }F_{0}^\pm \in \mathfrak{M}_{m^\pm,n^\pm} \mbox{ then } n^+ = n^-\},
\end{equation}
which is topologized by the natural product topology. The set $\beth$ is the set of all bi-infinite Bratteli diagrams, where we think of it as subset of a product space, taking the positive and negative parts of each diagram $\mathcal{B}$ separately.
\begin{proposition}
\label{prop:residual}
The set of Bratteli diagrams with positive part which is minimal and uniquely ergodic is residual in $\beth$.
\end{proposition}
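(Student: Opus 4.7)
My plan is to realize the set in question as the intersection of two residual subsets of $\beth$, one capturing minimality of the positive part and one ensuring the hypothesis of Theorem \ref{thm:main}(i) is satisfied, and then invoke the Baire category theorem (available since $\beth$ is Polish in the cylinder-set topology). The two properties decouple: minimality is handled directly by constructing dense open sets from the combinatorial minimality criterion, while unique ergodicity will follow by showing the set of diagrams with dense forward $\sigma$-orbit is residual and then applying Theorem \ref{thm:main}(i).

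For the minimality step, I would use the standard characterization that the positive part of $\mathcal{B}$ is minimal if and only if for every $n\geq 0$ there exists $m>n$ such that every pair $(v,w)\in \mathcal{V}_n\times \mathcal{V}_m$ is joined by a path, or equivalently that the corresponding product of transition matrices is strictly positive. For each $n$, the set $U_n$ of diagrams satisfying this condition at level $n$ is a union of cylinder sets (indexed by $m$ and the intermediate matrices), hence open. Density of $U_n$ is routine: given any non-empty cylinder, append at some positive index $>n$ a matrix with all entries positive; combined with the preceding specifications (whose partial products have no zero rows or columns, thanks to the $r,s$-surjectivity built into the definition of a Bratteli diagram), this produces a strictly positive product. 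Hence the set $\mathcal{M}:=\bigcap_n U_n$ of diagrams with minimal positive part is a dense $G_\delta$ in $\beth$.

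For the unique-ergodicity step, I would show the set $\mathcal{D}$ of diagrams with dense forward $\sigma$-orbit is residual, and then apply Theorem \ref{thm:main}(i). Fix a countable basis $\{\mathcal{U}_k\}$ of cylinders; the sets $W_k:=\bigcup_{n\geq 0}\sigma^{-n}(\mathcal{U}_k)$ are open and dense, as given any non-empty cylinder $\mathcal{V}$ one produces $\mathcal{B}\in \mathcal{V}$ with $\sigma^n(\mathcal{B})\in \mathcal{U}_k$ for some large $n$ by interpolating between the specifications of $\mathcal{V}$ and the $n$-shifted specifications of $\mathcal{U}_k$ using matrices of compatible dimensions. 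Thus $\mathcal{D}=\bigcap_k W_k$ is residual. Since $\beth$ has no isolated points, any $\mathcal{B}\in \mathcal{D}$ has a dense and therefore infinite orbit, and any open set meeting that orbit contains infinitely many orbit points; consequently for every $\mathcal{B}^*\in \beth$ there is a subsequence with $\sigma^{n_k}(\mathcal{B})\to \mathcal{B}^*$ and $n_k\to\infty$. Taking $\mathcal{B}^*\in \mathcal{M}$, Theorem \ref{thm:main}(i) then yields that the positive part of $\mathcal{B}$ admits a unique invariant probability measure. The residual intersection $\mathcal{M}\cap \mathcal{D}$ is therefore contained in the set of diagrams whose positive part is both minimal and uniquely ergodic, giving the conclusion.

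The main obstacle I anticipate is the bookkeeping behind density of the $W_k$: interpolating between two cylinder specifications in a bi-infinite diagram requires keeping track of dimension compatibility on both the positive and negative sides simultaneously, and matching positive and negative parts at level $0$. This is routine once one observes that any two integer dimensions can be bridged in one or two steps by matrices of the form $1\times n$ and $n\times 1$, but it is the only step where the construction is not completely automatic.
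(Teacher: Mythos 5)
Your argument is sound, but it takes a genuinely different route from the paper's. The paper's proof never mentions dense orbits or the main theorem: it observes that, for each $n$, the set of diagrams in which the $1\times 1$ matrix $(2)$ never appears as a transition matrix at a level $\geq n$ is closed and nowhere dense, so that for a residual set of diagrams this matrix occurs at infinitely many positive levels; then $|\mathcal{V}_k|=1$ infinitely often, and minimality together with unique ergodicity follow from the elementary statement in Appendix \ref{sec:strict}. You instead prove that the set $\mathcal{D}$ of diagrams with dense forward $\sigma$-orbit is residual and feed this into Theorem \ref{thm:main}(i): since the orbit of such a $\mathcal{B}$ accumulates (along $n_k\to\infty$) at some diagram with minimal positive part, Lemma \ref{lem:minimality} gives minimality of the positive part of $\mathcal{B}$ and Theorem \ref{thm:main}(i) gives unique ergodicity. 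This is logically legitimate --- Theorem \ref{thm:main} is proved without reference to Proposition \ref{prop:residual}, so there is no circularity --- and it is arguably closer in spirit to the ``Masur's criterion'' theme of the paper, exhibiting the generic diagram as one whose renormalization orbit recurs to every point of $\beth$; the price is that it invokes the main theorem where a short self-contained combinatorial argument suffices. Two caveats, neither fatal: your set $\mathcal{M}$ of diagrams with eventually strictly positive telescoped products also contains degenerate diagrams having only finitely many infinite positive paths (e.g.\ every positive-index $\mathcal{F}_k$ equal to the $1\times 1$ matrix $(1)$), and these are periodic rather than minimal in the paper's terminology --- but this is harmless, since for $\mathcal{B}\in\mathcal{D}$ minimality of the positive part already follows from Lemma \ref{lem:minimality}, making $\mathcal{M}$ redundant; and in checking density of the sets $W_k$ one must remember that $\sigma^n$ pushes the first $n$ positive levels of $\mathcal{B}$ into the negative part of $\sigma^n(\mathcal{B})$, so the negative-index specifications of $\mathcal{U}_k$ constrain positive-index matrices of $\mathcal{B}$ near level $n$ --- which is exactly the bookkeeping you flagged and which does go through with the $1\times m$ and $m\times 1$ bridging matrices.
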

\begin{proof}
Let $\mathcal{\bar{\mathcal{F}}}$ be the $1\times 1$ matrix $2$. For $n\in\mathbb{N}$, define
$$\mathfrak{X}_n = \{\mathcal{B}\in\beth: \mathcal{F}_k \neq \bar{\mathcal{F}} \mbox{ for all }k\geq n  \}.$$
The set $\mathfrak{X}_n$ is nowhere dense in $\beth$. Indeed, suppose it is dense in some set $\mathfrak{U}\subset \beth$. By the topology of clopen sets of $\beth$, the open set $\mathfrak{U}$ is specified by cylinder sets: there is a countable collection of basic cylinder sets $\{\mathfrak{C}_i\}_i$ such that $\mathfrak{U} = \bigcup_i \mathfrak{C}_i$ and the basic cylinder sets are specified by finitely many positions.

If $\bar{\mathcal{B}} = (\dots,\bar{\mathcal{F}}_{-2}, \bar{\mathcal{F}}_{-1}, \bar{\mathcal{F}}_1,\bar{\mathcal{F}}_2,\dots)\in \mathfrak{U}$, then finitely many of its entries are specified. Let $\bar{\mathcal{F}}_m$ be the one specified with the greatest index in absolute value (without loss of generality, we are assuming $\mathfrak{U}$ specifies matrices with positive indices). Then any diagram $\bar{\mathcal{B}}^\heartsuit = (\dots, \bar{\mathcal{F}}_{-2}^\heartsuit, \bar{\mathcal{F}}_{-1}^\heartsuit,\bar{\mathcal{F}}_1^\heartsuit, \bar{\mathcal{F}}_2^\heartsuit,\dots)$ with by $\bar{\mathcal{F}}_i^\heartsuit = \bar{\mathcal{F}}_i$ for $0\leq |i| \leq m$ and $\bar{\mathcal{F}}$ for $i\geq 2m$ is in $\mathfrak{U}$. However, there is no sequence $\mathcal{B}_i\rightarrow \bar{\mathcal{B}}^\heartsuit$ with all $\mathcal{B}_i\in\mathfrak{X}_n$ since this would imply that for large enough $i$, the Bratteli diagram $\mathcal{B}_i$ would have the matrix $\bar{\mathcal{F}}$ as a transition matrix in an arbitrarily high place, which is incompatible with being in $\mathfrak{X}_n$. So $\mathfrak{X}_n$ is nowhere dense in $\beth$. The set
$$\mathfrak{S} = \bigcap_{n\in\mathbb{N}} \mathfrak{X}_n^c$$
is a countable intersection of dense open sets. Note that for any Bratteli diagram $\mathcal{B} = (\dots, \mathcal{F}_{-2},\mathcal{F}_{-1}, \mathcal{F}_1,\mathcal{F}_2,\dots)\in\mathfrak{S}$ we have that $\mathcal{F}_k = \bar{\mathcal{F}}$ for infinitely many $k>0$. In particular, for infinitely many $k>0$, $|\mathcal{V}_k|=1$, which implies both minimality and unique ergodicity (see Appendix \ref{sec:strict}). Thus, the set of Bratteli diagrams with minimal and ergodic positive parts is residual in $\beth$.
\end{proof}

\section{Flat surfaces and translation flows}
\label{sec:flat}
Let $S$ be a Riemann surface and $\alpha$ a holomorphic 1-form on $S$, and let $\bar{\Sigma}$ denote the zero set of $\alpha$. The pair $(S,\alpha)$ is a \textbf{flat surface}: integrating $\alpha$ locally away from $\bar{\Sigma}$ gives coordinate charts to $S$ which are flat. That is, it gives $S$ charts $\{(\phi_i,U_i)\}$ of the form $\phi_{i}:U_i\rightarrow \mathbb{R}^2$ with transition functions of the form $\phi_i\circ \phi_j^{-1} : z\mapsto z+c_{i,j}$ for some $c_{i,j}\in\mathbb{R}^2$. The area of the flat surface is $\frac{2}{i}\int_S \alpha\wedge\bar\alpha$. We will be interested only in surfaces of finite area, so we will always in fact assume that $\frac{2}{i}\int_S \alpha\wedge\bar\alpha = 1$.

We do note assume that $S$ is compact. Let $\bar{S}$ denote the metric completion of $S$ with respect to the flat metric and let $\Sigma = \bar{\Sigma} \cup (\bar{S}\backslash S)$. We will refer to $\Sigma$ as the \textbf{singularity set} of $(S,\alpha)$.

A flat surface comes with two distiguished directions, the \textbf{vertical and horizontal} directions. They are given by locally (away from $\Sigma$) by the distributions $\ker \Re(\alpha)$ and $\ker \Im(\alpha)$, respectively. These distributions are integrable and they define the vertical and horizontal foliations, $\mathcal{F}^v_\alpha$ and $\mathcal{F}^h_\alpha$. The unit speed parametrizations of these foliations are called the \textbf{vertical and horizontal flows} and they are denoted by $\varphi_t^Y$ and $\varphi_t^X$, respectively. These flows are not defined for all time for all points: a point on a singular leaf (a leaf which includes a point in $\Sigma$ as a limit point) of the foliation has an orbit which is defined only for finite time and it ends when the orbit goes to $\Sigma$. Both of these flows preserve the Lebesgue measure $\omega_\alpha = \Re(\alpha)\wedge \Im(\alpha)$.

Let $(S,\alpha)$ be a flat surface. The \textbf{Teichm\"{u}ller deformation} of $(S,\alpha)$ is the one parameter family of flat surfaces $g_t(S,\alpha):=(S,\alpha_t)$, where $\alpha_t  = e^t\Re(\alpha) + ie^{-t}\Im(\alpha)$. The effect this deformation has on the original surface is that it contracts the vertical foliation by $e^{-t}$ while it expands the horizontal foliation by $e^t$. The area of the surface remains unchanged.
\subsection{Surfaces from diagrams}
\label{subsec:SirfDiag}
We recall the construction from \cite[\S 6]{LT} where, given a weighted, fully ordered bi-infinite Bratteli diagram $(\mathcal{B},w^\pm,\leq_{r,s})$ a flat surface $S(\mathcal{B},w^\pm,\leq_{r,s})$ of area 1 can be constructed. Appendix \ref{sec:fib} contains an explicit example of the construction from \cite[\S 6]{LT}. The reader should consult \cite[\S 6]{LT} for the full details of the construction as well as more examples. 

Let $(B^+,w^+,\leq_{r,s})$ and $(B^-,w^-,\leq_{r,s})$ be, respectively, the weighted, fully ordered Bratteli diagrans obtained from the positive and negative parts of the weighted, fully ordered bi-infinite Bratteli diagram $(\mathcal{B},w^\pm, \leq_{r,s})$ as described in \S \ref{subsec:bi-inf}. As detailed in \cite[\S 6.1]{LT}, there exist maps $T^+:[0,1]\rightarrow[0,1]$ and $T^-:[0,b^-]\rightarrow [0,b^-]$, where $b^-$ will be defined by a condition of the type (\ref{eqn:probWeight}), which are piecewise, orientation-preserving isometries, that is, interval exhange transformations. These interval exchange transformations are measurably conjugate to the Bratteli-Vershik transformations defined by the positive/negative parts of $\mathcal{B}$, with the weight functions $w^\pm$ giving the invariant measures (see Remark \ref{rem:measures}). These transformations are constructed using cutting-and-stacking using the information given by the Bratteli diagrams $(B^\pm,w^\pm,\leq_{r,s})$.

Define the $|\mathcal{V}_0|$ sets $R_1,\dots, R_{|\mathcal{V}_0|}$ by
\begin{equation}
\label{eqn:rectangles}
R_i = \left[\sum_{i<j}w^+(v_j),\sum_{i\leq j}w^+(v_j)\right]\times \left[\sum_{i<j}w^-(v_j),\sum_{i\leq j}w^-(v_j)\right].
\end{equation}
We define $\Sigma\subset \bigcup_i R_i$ to be the points in $\bigcup_i\partial R_i$ which correspond to discontinuities of $T^+$ or $T^-$. The flat surface $S(\mathcal{B},w^\pm,\leq_{r,s})$ is constructed by giving identifications to the collection of edges of the $R_i$:
\begin{equation}
\label{eqn:RecIds}
 S(\mathcal{B},w^\pm,\leq_{r,s}) = \left(\bigcup_{i=1}^{|\mathcal{V}_0|} R_i\backslash \Sigma   \right) / \sim ,
\end{equation}
where $ \sim$ is the relation given by the following identifications on the edges on $\left(\bigcup_i \partial R_i  \right)\backslash \Sigma$. For a point $(x,y)$ on a top edge of $\bigcup_i R_i\backslash \Sigma$, then $(x,y)\sim (T^+(x), y_{T^+(x)})$, where $y_{T^+(x)}$ is the $y$-coordinate of the bottom edge of the rectangle where $T^+(x)$ lies. Likewise,for a point $(x,y)$ on a right edge of $\bigcup_i R_i\backslash \Sigma$, then $(x,y)\sim (x_{T^-(y)}, T^-(y))$, where $x_{T^-(y)}$ is the $x$-coordinate of the left edge of the rectangle where $T^-(y)$ lies.

In the definition above of the surface, we did not give an explicit holomorphic 1-form $\alpha$ to give this surface. However, this 1-form is implicitly defined since the surface does has change of charts given by translations. For the translation surface $S(\mathcal{B},w^\pm,\leq_{r,s})$, the vertical flow on $S(\mathcal{B},w^\pm,\leq_{r,s})$ is a special flow over the interval exchange transformation $T^+$ with constant roof functions given by $w^-(v)$, $v\in\mathcal{V}_0$. Likewise, the horizontal flow corresponds to a special flow over the exchange $T^-$ with roof functions given by $w^+(v)$, $v\in\mathcal{V}_0$. The weight functions $w^\pm$ correspond to the transverse measures to the vertical and horizontal flows which are invariant by the flows: they are given by $\Re(\alpha)$ for $w^+$ and $\Im(\alpha)$ for $w^-$. The Lebesgue measure on $S(\mathcal{B},w^\pm,\leq_{r,s})$ is the product measure of these measures ($\omega_\alpha = \Re(\alpha)\wedge\Im(\alpha)$), and we denote it by $\omega_{(\mathcal{B},w^\pm, \leq_{r,s})}$. As constructed, if the weight functions $w^\pm$ satisfy (\ref{eqn:probWeight}), then $S(\mathcal{B},w^\pm,\leq_{r,s})$ has area 1.

The construction of the surface $S(\mathcal{B},w^\pm, \leq_{r,s})$ from the diagram $(\mathcal{B},w^\pm, \leq_{r,s})$ was done through a series of cutting and stacking operations described in \cite[\S 6]{LT}. Another way of thinking about this is to start the $|V_0|$ tori corresponding to the rectangles in (\ref{eqn:rectangles}) each with its top/bottom and left/right sides identifies. We can think of the surface $S(\mathcal{B},w^\pm, \leq_{r,s})$ as the limiting object we get by making, for every level of the positive (negative) part of the Bratteli diagram $\mathcal{B}$, a series of incisions on the tori somewhere on the collection top (left) edges and identifying them with incisions made somewhere on the collection of bottom (right) edges of the rectangles. The size of the incisions are dictated by $w^+$ ($w^-$) while the identifications are defined by $\leq_{r,s}$ via the cutting and stacking operations. Therefore, for every finite $k$, considering the the surface obtained by starting with $|V_0|$ tori and making such incisions up to level $k$, one obtains a sequence $S^k(\mathcal{B},w^\pm, \leq_{r,s})$ of surfaces of finite genus. The vertical flow on any one of these surfaces is completely periodic: almost every point flows for a finite time before coming back to itself and the surface decomposes into a disjoint union of cylinders corresponding to families of periodic orbits. This idea leads to the following.
\begin{proposition}
\label{prop:density}
The set $\mathfrak{P}$ of Bratteli diagrams $\mathcal{B}$ for which any surface $S(\mathcal{B},w^\pm,\leq_{r,s})$ has a completely periodic vertical and horizontal foliation is dense in $\beth$.
\end{proposition}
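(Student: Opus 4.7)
The plan is to prove density by constructing, inside any non-empty basic cylinder set of $\beth$, a concrete bi-infinite Bratteli diagram whose positive and negative parts consist entirely of finite tail equivalence classes; any such diagram will automatically lie in $\mathfrak{P}$. First I would fix a basic cylinder set $\mathfrak{U}\subset\beth$ specifying matrices $\mathcal{F}_{-m},\ldots,\mathcal{F}_{-1},\mathcal{F}_1,\ldots,\mathcal{F}_m$ at indices $0<|k|\le m$, and then define a diagram $\mathcal{B}\in\mathfrak{U}$ by setting $\mathcal{F}_k$ to be the identity matrix of the appropriate size for every $|k|>m$ (of size $|\mathcal{V}_m|$ for $k>m$ and $|\mathcal{V}_{-m}|$ for $k<-m$), which is size-compatible with $\mathcal{F}_{\pm m}$ and hence defines a bona fide bi-infinite Bratteli diagram in $\mathfrak{U}$.

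Next I would verify that the positive and negative parts of $\mathcal{B}$ decompose into finitely many periodic components, independently of any choice of order. For $k>m$, each vertex in $\mathcal{V}_k$ has a unique incoming edge from $\mathcal{V}_{k-1}$, so an infinite path in $X_{B^+}$ is determined by its truncation $(e_1,\ldots,e_m)$ together with the terminal vertex $r(e_m)\in\mathcal{V}_m$. Consequently, the tail equivalence class of any $\bar{e}\in X_{B^+}$ consists of all $\bar{f}$ satisfying $r(f_m)=r(e_m)$, a finite set. The same reasoning applies to $X_{B^-}$. By the discussion following the definition of tail equivalence, each class is a periodic component, so the positive and negative parts both decompose into finitely many periodic components.

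It follows that, for any order $\leq_{r,s}$, any Bratteli--Vershik transformation on the positive (respectively negative) part of $\mathcal{B}$ is a finite-order permutation on each periodic component. Since the vertical flow on $S(\mathcal{B},w^\pm,\leq_{r,s})$ is a special flow over the positive-part Bratteli--Vershik map with constant roof values $w^-(v)$, and analogously the horizontal flow is a special flow over the negative-part map with roof values $w^+(v)$, both flows are periodic for every admissible $w^\pm$ and $\leq_{r,s}$. Equivalently, the cutting-and-stacking procedure stabilizes: $S(\mathcal{B},w^\pm,\leq_{r,s})$ coincides with the finite-level surface $S^m(\mathcal{B},w^\pm,\leq_{r,s})$ described in the paragraph preceding the proposition, which has completely periodic vertical and horizontal foliations by construction. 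Hence $\mathcal{B}\in\mathfrak{P}\cap\mathfrak{U}$, and since $\mathfrak{U}$ was arbitrary, $\mathfrak{P}$ is dense in $\beth$.

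I do not anticipate a serious obstacle here: the argument is explicit construction plus direct verification. The only care required is ensuring size-compatibility at the seams between level $m$ and level $m+1$ on both sides of the welding point, and noting that the identity-extension argument is carried out separately on the positive and negative halves since the welding decouples the two.
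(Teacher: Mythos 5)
Your proposal is correct and follows essentially the same route as the paper: both arguments approximate (or realize, inside a given cylinder set) a diagram by capping it with identity transition matrices beyond a finite level, so that all tail equivalence classes become finite periodic components and the resulting vertical and horizontal flows are completely periodic for every choice of $w^\pm$ and $\leq_{r,s}$. The only difference is presentational — you phrase density via nonempty intersection with every basic cylinder set, while the paper exhibits a convergent sequence $\mathcal{B}_i\rightarrow\mathcal{B}$ — and these are equivalent in the cylinder-set topology.
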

\begin{proof}
Let $\mathcal{B}\in\beth$ and let $S(\mathcal{B},w^\pm, \leq_{r,s})$ be  the surface from the diagram $(\mathcal{B},w^\pm, \leq_{r,s})$ for some choice of $w^\pm$ and $\leq_{r,s}$. Define the sequence of Bratteli diagrams $(\mathcal{B}_i,w^\pm_i,\leq_{r,s}^i)$ as follows. Let $\mathcal{F}_k$ be the $k^{th}$ matrix corresponding to $\mathcal{B}$. The $k^{th}$ transition matrix of $\mathcal{B}_i$, $\mathcal{F}_k^i$, is defined as
$$ \mathcal{F}_k^i = \left \{ \begin{array}{ll}  \mathcal{F}_k &\mbox{ if } |k| \leq i, \\
\mathrm{Id}_{|\mathcal{V}_{i}|}&\mbox{ if  } k>i \\
\mathrm{Id}_{|\mathcal{V}_{-i}|}&\mbox{ if  } k<-i
\end{array}\right. ,$$
where $\mathrm{Id}_n$ is the $n\times n$ identity matrix. Every such Bratteli diagram $\mathcal{B}_i$ decomposes into finitely many is periodic components for the tail equivalence relation for both its positive and negative parts. We can give weights and ordering to each $\mathcal{B}_i$ as follows. For $|k|\leq i$, the weigth for an edge $e \in \mathcal{E}_k^i$ is $w_i^\pm(e) = w^\pm(e)$. Since $\mathcal{B}_i$ is periodic, this determines all the weights in $\mathcal{B}_i$. The orderings for $\mathcal{B}_i$ are obtained in a similar way: for $|k|\leq i$, we just copy the orderings at every vertex $v\in \mathcal{V}_k$ to its copy $\mathcal{V}_k^i$. As such, we have the convergence $\mathcal{B}_i\rightarrow \mathcal{B}$ with any surface $S(\mathcal{B}_i,w_i^\pm, \leq^i_{r,s})$ having completely periodic horizontal and vertical flows.
\end{proof}
As such, the weighted, ordered Bratteli diagram $(\mathcal{B},w^\pm, \leq_{r,s})$ can be seen as the limit of \textbf{periodic} Bratteli diagrams $(\mathcal{B}_i,w^\pm_i, \leq_{r,s}^i)$, and therefore $S(\mathcal{B},w^\pm, \leq_{r,s})$ can be also considered as a limit of $S(\mathcal{B}_i,w_i^\pm, \leq_{r,s}^i)$. The dynamical system defined by the vertical flow on $S(\mathcal{B},w^\pm, \leq_{r,s})$ can be seen as a limit of periodic systems.
 When $S(\mathcal{B},w^\pm, \leq_{r,s})$ is a flat surface of finite genus, this fact is already known: $S(\mathcal{B},w^\pm, \leq_{r,s})$ belongs to a stratum $\mathcal{H}(\kappa)$ in the moduli space of flat surfaces and it is well known that periodic surfaces are dense in the moduli space. Therefore any sequence in $\mathcal{H}(\kappa)$ consisting of completely periodic surfaces which are limiting to $S(\mathcal{B},w^\pm, \leq_{r,s})$ is a periodic approximation to the vertical flow of $S(\mathcal{B},w^\pm, \leq_{r,s})$. Therefore, what is surprising, by the above remarks, is that even if $S(\mathcal{B},w^\pm, \leq_{r,s})$ is of infinite genus, there are sequences $S^k$ of completely periodic surfaces of finite genus limiting to $S(\mathcal{B},w^\pm, \leq_{r,s})$. As such, we could imagine that periodic surfaces are also dense in the bigger ``moduli space'' $\beth$ which include both finite genus and infinite genus surfaces.
\subsection{Renormalization}
\label{subsec:renorm}
We recall the renormalization procedure obtained by shifting introduced in \cite{LT}. Let $(\mathcal{B},w^\pm, \leq_{r,s})$ be a weighted, ordered Bratteli diagram. In this section we define a sequence of maps 
$$\mathcal{R}_k: S(\mathcal{B},w^\pm,\leq_{r,s})\longrightarrow S(\mathcal{B}'_k, w^\pm_k, \leq^k_{r,s}) $$
which take flat surfaces constructed as in the previous section to other surfaces of the same type.

For a weighted, fully ordered Bratteli diagram $(\mathcal{B},w^\pm),\leq_{r,s}$, let $h^0=\{h_1^0,\dots, h_{|\mathcal{V}_0|}^0\}\in\mathbb{R}^{|\mathcal{V}_0|}$ be the vector where every entry is defined by $h_v^0 = w^-(v)$ (recall that the vertices in $\mathcal{V}_0$ are ordered, so we identify each vertex $v$ with its corresponding index in $\{1,\dots, |\mathcal{V}_0|\}$). Define recursively the height vectors $h^k = \mathcal{F}_k h^0$ for $k>1$. Define $\ell_v^k = w^+(v)$ for a specific $v\in \mathcal{V}_k$, $k\in\mathbb{N}$.

The data defining the maps $\mathcal{R}_k$ will be related as follows: for $\mathcal{B} = (\mathcal{V},\mathcal{E})$, $\mathcal{B}'_k = (\mathcal{V}',\mathcal{E}')$ is obtained by shifting $\mathcal{B}$:  $\mathcal{V}'_i = \mathcal{V}_{i+k}$ and $\mathcal{E}'_i = \mathcal{E}_{i+k}$ along with their orders $\leq_{r,s}$ and $w^\pm_k = e^{\pm t_k}w^\pm$, where the $t_k$ belong to the sequence of renormalization times
\begin{equation}
\label{eqn:rTimes}
t_k \equiv -\log\left( \sum_{v\in \mathcal{V}_k}\ell^k_v \right) = -\log\left( \sum_{v\in \mathcal{V}_k}w^+(v) \right)
\end{equation}
for $k>0$. Note that if the positive part of $\mathcal{B}$ has a periodic component under the tail equivalence relation to which $w^+$ assigns a positive weight, then $\sup_k t_k <\infty$. Otherwise, if all periodic components are given zero weight, or if there are no periodic components, we have that $t_k\rightarrow\infty$.

The renormalized heights and widths are obtained from (\ref{eqn:rTimes}) by
\begin{equation}
\label{eqn:rHV}
\bar{h}^k_v = e^{-t_k}h^k_v\hspace{.2in}\mbox{ and } \hspace{.2in}\bar{\ell}^k_v = e^{t_k} \ell^k_v
\end{equation}
for any $v\in \mathcal{V}_k$ and $k\in\mathbb{N}$. Until further notice, we shall assume that the positive part of $\mathcal{B}$ is aperiodic so that $t_k\rightarrow \infty$.

Let $(\mathcal{B},w^\pm, \leq_{r,s})$ be a weighted, ordered Bratteli diagram, $S(\mathcal{B},w^\pm, \leq_{r,s})$ be the flat surface constructed obtained from it through the construction in \S \ref{subsec:SirfDiag}. Let $S_t(\mathcal{B},w^\pm,\leq_{r,s}) = g_tS(\mathcal{B},w^\pm,\leq_{r,s})$ be the surface obtained by deforming $S(\mathcal{B},w^\pm,\leq_{r,s})$ using the Teichm\"{u}ller deformation. Consider the surface $S_{t_1}(\mathcal{B},w^\pm, \leq_{r,s})$, for $t_1$ defined in (\ref{eqn:rTimes}).

Choose some vertex $v_i \in \mathcal{V}_0$. By our deformation of the surface, the interior of every deformed rectangle $g_{t_1}R_i$ in (\ref{eqn:rectangles}) corresponding to the vertex $v_i$ in $\mathcal{V}_0$ is isometric to $(0,\bar{\ell}_i^0)\times (0,\bar{h}^0_i)$. We cut the rectangle $g_{t_1}R_i$ (associated to the tower over $v_i$) into sub-rectangles of width $\bar{\ell}^0_{v_i}w(e)$ and height $\bar{h}^0_{v_i}$ using the placement choice $\leq_s$ on $v_i$ for every $e$ is an edge with $s(e)=v_i$. Doing this for every vertex $v_i\in \mathcal{V}_0$ we have $|E_1|$ subrectangles corresponding to edges in $E_1$ which were obtained as subrectangles of the $R_j$.

Now the sub-rectangles are stacked into $|\mathcal{V}_1|$ new towers using the orders given by the order in $r^{-1}(v)$. For some $v\in \mathcal{V}_1$, let $(e_1,\dots,e_n)$ be the ordered set of edges in $r^{-1}(v)$. For all $i\in \{1,\dots, |r^{-1}(v)|-1\}$, we identify the interior of the top of the sub-rectangle corresponding to the edge $e_i$ to the interior of the bottom edge of the sub-rectangle corresponding to the edge $e_{i+1}$.  Denote the surface obtained by the process of deforming and cutting and stacking described above as $DS(\mathcal{B},w^\pm,\leq_{r,s})$ and define
\begin{equation}
\label{eqn:renMap}
\mathcal{R}:S(\mathcal{B},w^\pm,\leq_{r,s})\longrightarrow DS(\mathcal{B},w^\pm,\leq_{r,s})
\end{equation}
to be the map taking one surface to the other by this process. We point out that the map between $S_{t_1}(\mathcal{B},w^\pm,\leq_{r,s})$ and $S(\mathcal{B}', w^\pm_1, \leq^1_{r,s})$ is an isometry: the cutting and stacking does not change the flat metric in any way.

\begin{definition}[Shifting]
The diagram $(\mathcal{B}',w^\pm_1,\leq'_{r,s})$ is the \emph{shift} of $(\mathcal{B},w^\pm,\leq_{r,s})$ if it can be constructed as follows. $\mathcal{V}'_{i} = \mathcal{V}_{i+1}$ for all $i\in\mathbb{Z}$ and $\mathcal{E}'_i=\mathcal{E}_{i+1}$ for all $i\neq -1$. For $i=-1$, $\mathcal{E}'_{-1} = \mathcal{E}_1$. As such, there is a bijection $\sigma: \mathcal{B}'\rightarrow \mathcal{B}$ corresponding to this shift. 

Now we define $w^\pm_1:\mathcal{V}_0'\cup \mathcal{E}'\rightarrow (0,1)$ to be the weight function obtained from $(\mathcal{B},w^\pm)$ as follows: for $v\in \mathcal{V}_0'$, $w^\pm_1(v) = e^{\pm t_1}w^\pm(\sigma^{-1}(v))$, $w^+_1(e) = w^+(\sigma^{-1}(e))$ for any $e\in \sigma(\mathcal{E}^+)\backslash \mathcal{E}'_{-1}$, and $w^-_1(e) = w^-(\sigma^{-1}(e))$ for any $e\in \sigma(\mathcal{E}^-)$. Let $\leq_{r,s}'$ be defined on $\mathcal{B}'$ by $e\leq_{r,s}' f$ if and only if $\sigma(e) \leq_{r,s} \sigma(f)$ on $\mathcal{B}$.
\end{definition}
We will denote by $\sigma(\mathcal{B},w^\pm,\leq_{r,s})$ the shift of $(\mathcal{B},w^\pm,\leq_{r,s})$, and by $\sigma^k$ the process of shifting $k$ times. It is straight forward to check from the definition that if $(\mathcal{B},w^\pm,\leq_{r,s})$ is a weighted, ordered Bratteli diagram, then so is $\sigma(\mathcal{B},w^\pm,\leq_{r,s})$. The following proposition, which was proved in \cite{LT}, showcases a crucial property of the construction.
\begin{proposition}[Functoriality]
\label{prop:shift}
Let $(\mathcal{B},w^\pm, \leq_{r,s})$ be a weighted, ordered Bratteli diagram. Then $S(\sigma(\mathcal{B},w^\pm, \leq_{r,s})) $ is isometric to $g_{t_1}S(\mathcal{B},w^\pm, \leq_{r,s})$.
\end{proposition}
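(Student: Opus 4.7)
The plan is to exhibit the isometry explicitly, namely the cutting-and-stacking map $\mathcal{R}$ of \S\ref{subsec:renorm}, reinterpreted as a map from $g_{t_1}S(\mathcal{B},w^\pm,\leq_{r,s})$ onto $S(\sigma(\mathcal{B},w^\pm,\leq_{r,s}))$. Since $\mathcal{R}$ merely rearranges sub-rectangles by translations, it is automatically a local isometry for the flat metric, and $g_{t_1}$ is a rigid rescaling; the real work is therefore a combinatorial identification of the image of $\mathcal{R}\circ g_{t_1}$ with the surface built from the shifted data.

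First I would match the base rectangles. The base rectangles of $S(\sigma(\mathcal{B},w^\pm,\leq_{r,s}))$ are indexed by $\mathcal{V}'_0=\mathcal{V}_1$, with rectangle for $v$ of width $w^+_1(v)=e^{t_1}w^+(v)$ and height $w^-_1(v)=e^{-t_1}w^-(v)$, where $w^-(v)$ for $v\in\mathcal{V}_k$ with $k>0$ is understood as the tower height $h^k_v$. On the other side, $\mathcal{R}$ slices each $g_{t_1}R_{v_0}$ for $v_0\in\mathcal{V}_0$ vertically into strips, one per edge $e\in s^{-1}(v_0)\subset\mathcal{E}_1$, of width $e^{t_1}w^+(v_0)w^+(e)=e^{t_1}w^+(r(e))$ using the order $\leq_s$, then stacks the strips with common range $v\in\mathcal{V}_1$ using $\leq_r$. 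The resulting rectangle has width $e^{t_1}w^+(v)$ and height $e^{-t_1}h^1_v=e^{-t_1}\sum_{e:r(e)=v}w^-(s(e))$, matching the dimensions above by condition (i) of the weight function.

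Next I would match the identifications on top/bottom and left/right edges of these new rectangles. The shifted positive part has level-$k$ matrix equal to $\mathcal{F}_{k+1}$, so the top/bottom identifications of $S(\sigma(\mathcal{B},\ldots))$ use levels $k\geq 2$ of $\mathcal{B}$; but $\mathcal{R}$ has already consumed the level-$1$ data in forming the new base rectangles, so precisely the levels $\geq 2$ of $\mathcal{B}$ remain to identify the top and bottom edges of $\mathcal{R}(g_{t_1}R_\cdot)$, as required. The left/right identifications are subtler: by the shift rule $\mathcal{E}'_{-1}=\mathcal{E}_1$ and $\mathcal{E}'_{-k}=\mathcal{E}_{-k+1}$ for $k\geq 2$, so the shifted construction glues left/right using (i) the positive-side edges $\mathcal{E}_1$ at the first negative step and (ii) the original negative-part edges of $\mathcal{B}$ at deeper levels. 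These correspond exactly to the two sources of left/right identifications inside $\mathcal{R}(g_{t_1}S(\mathcal{B},\ldots))$: (i) the way horizontal sub-strips are stacked inside each new rectangle during $\mathcal{R}$, and (ii) the original negative-part identifications of $\mathcal{B}$ inherited by each sub-strip.

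The main bookkeeping obstacle is the relabelling of orders under welding: one must verify that the $\leq_r$-order on $\mathcal{E}_1\subset\mathcal{B}$ (which governs how $\mathcal{R}$ stacks strips) corresponds to the $\leq_s'$-order on $\mathcal{E}'_{-1}\subset\sigma\mathcal{B}$ (which governs the first-level left/right identification in the shifted construction), and symmetrically that $\leq_s$ on $\mathcal{E}_1$ becomes $\leq_r'$ on $\mathcal{E}'_{-1}$. This follows directly from the welding convention recalled in \S\ref{subsec:bi-inf} together with the explicit rules for $\leq_{r,s}'$ and $w^\pm_1$ in the shift definition. Once this correspondence is in place, $\mathcal{R}:g_{t_1}S(\mathcal{B},w^\pm,\leq_{r,s})\to S(\sigma(\mathcal{B},w^\pm,\leq_{r,s}))$ is the desired surjective isometry, proving the proposition.
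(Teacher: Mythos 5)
Your proposal is correct and takes essentially the same route as the paper: the isometry is exactly the cutting-and-stacking map $\mathcal{R}$ of \S \ref{subsec:renorm}, which is a local isometry because it only translates sub-rectangles, and the remaining work is the combinatorial identification of its image with the surface built from the shifted data (the paper itself defers this bookkeeping to \cite[Proposition 6.3]{LT}). Your matching of widths/heights via condition (i) of the weight function, of top/bottom gluings with levels $\geq 2$, and of left/right gluings with $\mathcal{E}'_{-1}=\mathcal{E}_1$ plus the old negative part under the welding order-swap is precisely that bookkeeping.
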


The shift $\sigma$ on a Bratteli diagram yields a sequence of surfaces 
$$S_k(\mathcal{B},w^\pm, \leq_{r,s}) \equiv S(\sigma^k(\mathcal{B},w^\pm, \leq_{r,s}))$$
 which are obtained as a shift on the starting Bratteli diagram $B$ and by rescaling the weights in the shifted diagram by the appropriate quantities.
 In a slight abuse of notation, we will denote by $\mathcal{R}_k$ the map satisfying
\begin{equation}
\label{eqn:Rmaps} 
\mathcal{R}_k(S(\mathcal{B},w^\pm, \leq_{r,s})) = S(\sigma^k(\mathcal{B},w^\pm, \leq_{r,s})),
\end{equation}
which is obtained through composition of maps of the type defined in (\ref{eqn:renMap}). By Proposition \ref{prop:shift}, for each $k$ the surface $S_k(\mathcal{B},w^\pm, \leq_{r,s})$ is obtained by deforming $S(\mathcal{B},w^\pm, \leq_{r,s})$ for time $t_k$ and then cutting and stacking. We will denote this sequence of surfaces by 
\begin{equation}
\label{eqn:surfSeq}
S(\mathcal{B}_k,w^\pm_k,\leq_{r,s}^k) := S(\sigma^k(\mathcal{B},w^\pm,\leq_{r,s})).
\end{equation}

\begin{definition}
\label{def:stationary}
A Bratteli diagram $\mathcal{B}$ is called \textbf{stationary} if there is a $j\in\mathbb{N}$ such that $\mathcal{F}_{k+nj} = \mathcal{F}_{k}$ for all $k,n\in\mathbb{Z}$. In other words, the transition matrices $\mathcal{F}_k$ of a stationary diagram repeat themselves every $j$ entries. A fully ordered Bratteli diagram $(\mathcal{B},w^\pm)$ is stationary if the orders $\leq_{r,s}$ are also repeat themselves after $j$ entries.
\end{definition}
For a flat surface $S(\mathcal{B},w^\pm,\leq_{r,s})$ constructed from a fully ordered, stationary Bratteli diagram, (\ref{eqn:Rmaps}) defines a \textbf{pseudo-Anosov} map $\mathcal{R}_j:S(\mathcal{B},w^\pm,\leq_{r,s})\rightarrow S(\mathcal{B},w^\pm,\leq_{r,s})$, that is, a homeomorphism of the surface which is uniformly expanding on the horizontal direction and uniformly contracting in the vertical one on the complement of $\Sigma$. The following can be proved by a density argument similar to that of the proof of Proposition \ref{prop:density}.
\begin{proposition}
\label{prop:density2}
The set $\mathfrak{A}$ of Bratteli diagrams $\mathcal{B}$ for which $S(\mathcal{B},w^\pm,\leq_{r,s})$ admits a pseudo-Anosov diffeomorphism is dense in $\beth$.
\end{proposition}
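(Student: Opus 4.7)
The plan is to mirror the strategy of the proof of Proposition \ref{prop:density}, replacing the role of periodic Bratteli diagrams by fully ordered \emph{stationary} diagrams in the sense of Definition \ref{def:stationary}. By the sentence immediately preceding the proposition, every such diagram yields a surface admitting the pseudo-Anosov diffeomorphism $\mathcal{R}_j$ (where $j$ is the period), so it suffices to prove that the set of stationary, fully ordered, weighted bi-infinite diagrams is dense in $\beth$.

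Fix $\mathcal{B}\in\beth$ together with any weight function $w^\pm$ (existence guaranteed by Lemma \ref{lem:existence}) and full order $\leq_{r,s}$, and fix an arbitrary basic cylinder neighborhood $\mathfrak{U}$ of $\mathcal{B}$. By definition of the topology on $\beth$, the cylinder $\mathfrak{U}$ constrains only finitely many transition matrices $\mathcal{F}_k$, say for $|k|\leq N$. I would construct a stationary $\tilde{\mathcal{B}}\in\mathfrak{U}$ as follows. On the positive side, set $\tilde{\mathcal{F}}_k=\mathcal{F}_k$ for $1\leq k\leq N$, insert a single ``connecting'' matrix $\tilde{\mathcal{F}}_{N+1}$ of size $|\mathcal{V}_0|\times|\mathcal{V}_N|$ with all entries equal to $1$, and then extend periodically by declaring $\tilde{\mathcal{F}}_{k+(N+1)}=\tilde{\mathcal{F}}_k$ for $k\geq 1$. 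Perform the symmetric construction on the negative side using a connecting matrix of size $|\mathcal{V}_{-N}|\times|\mathcal{V}_0|$ with all entries equal to $1$. Welding at $\mathcal{V}_0$ produces a bi-infinite diagram $\tilde{\mathcal{B}}$ of period $N+1$ that coincides with $\mathcal{B}$ for $|k|\leq N$, so $\tilde{\mathcal{B}}\in\mathfrak{U}$. Extend $\leq_{r,s}$ by copying orders periodically with period $N+1$, and equip $\tilde{\mathcal{B}}$ with any probability weight function compatible with this stationary structure, again using Lemma \ref{lem:existence}.

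By the paragraph preceding the proposition, the map $\mathcal{R}_{N+1}$ of (\ref{eqn:Rmaps}) is a pseudo-Anosov diffeomorphism of $S(\tilde{\mathcal{B}},w^\pm,\leq_{r,s})$, so $\tilde{\mathcal{B}}\in\mathfrak{A}$, and density follows. The main technical point -- the analogue of the key step in the proof of Proposition \ref{prop:density} -- is verifying that the stationary extension yields a bona fide Bratteli diagram across the ``seam'' at $k=N+1$ (dimensions must match and no row or column may vanish), and that the product $\tilde{\mathcal{F}}_{N+1}\tilde{\mathcal{F}}_N\cdots\tilde{\mathcal{F}}_1$ is primitive with Perron eigenvalue strictly larger than $1$, so that the resulting map is genuinely hyperbolic on the horizontal/vertical splitting rather than degenerate. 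Choosing the connecting matrices to have all entries equal to $1$ handles both issues simultaneously, completing the density argument.
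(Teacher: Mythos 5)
Your overall strategy is the intended one (the paper only sketches this proof by analogy with Proposition \ref{prop:density}): periodize inside a prescribed cylinder set and invoke the remark that stationary diagrams yield pseudo-Anosov maps via $\mathcal{R}_j$. However, there is a genuine flaw in the execution. You make the positive part eventually periodic with block $(\mathcal{F}_1,\dots,\mathcal{F}_N,C^+)$ and the negative part eventually periodic with a \emph{different} block built from $(\mathcal{F}_{-N},\dots,\mathcal{F}_{-1})$. The resulting diagram is not stationary in the sense of Definition \ref{def:stationary}, which demands $\mathcal{F}_{k+nj}=\mathcal{F}_k$ for all $k,n\in\mathbb{Z}$, i.e.\ a single period for the entire bi-infinite sequence. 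This matters because the shift $\sigma$ mixes the two halves: after $j$ shifts the old matrices $\mathcal{F}_1,\dots,\mathcal{F}_j$ become part of the new negative part, so $\sigma^{N+1}(\tilde{\mathcal{B}})=\tilde{\mathcal{B}}$ forces the negative-index matrices to be the periodic continuation of the positive block. With your construction $\sigma^{N+1}(\tilde{\mathcal{B}})\neq\tilde{\mathcal{B}}$ in general, the surfaces $S(\sigma^{N+1}(\tilde{\mathcal{B}},\dots))$ and $S(\tilde{\mathcal{B}},\dots)$ are built from different interval exchanges $T^-$, and $\mathcal{R}_{N+1}$ is not a self-map, so the paragraph preceding the proposition cannot be invoked. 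The repair is easy: take as the repeating block the \emph{entire} specified window $(\mathcal{F}_{-N},\dots,\mathcal{F}_{-1},\mathcal{F}_1,\dots,\mathcal{F}_N)$ followed by one all-positive connector of size $|\mathcal{V}_{-N}|\times|\mathcal{V}_{N}|$, and extend this single block periodically in both directions.

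Two smaller points. First, for $\mathcal{R}_j$ to be a homeomorphism of the fixed surface you must also choose $w^\pm$ to be the Perron--Frobenius eigenvector weights of the period matrix (as in the Fibonacci example of Appendix \ref{sec:fib}), not merely ``any probability weight function''; otherwise the renormalized weights $e^{\pm t_j}w^\pm$ after shifting do not reproduce $w^\pm$ and the target surface differs from the source. Second, your claim that an all-ones connector forces the Perron eigenvalue to exceed $1$ fails in the degenerate case where every level has a single vertex and a single edge (the period product is then the $1\times1$ matrix $1$, $t_j=0$, and $\mathcal{R}_j$ is an isometry, not pseudo-Anosov); taking the connector entries to be $2$, say, removes this exception while keeping the diagram inside the cylinder, since the connector sits outside the specified window.
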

\section{Proof of Theorem \ref{thm:main}}
\label{sec:proof}
In this section we prove Theorem \ref{thm:main}. To do so, we will invoke the following theorem from \cite{rodrigo:ergodicity}.
\begin{theorem}
\label{thm:integrability}
Let $(S,\alpha)$ be a flat surface of finite area. Suppose that for any $\eta>0$ there exist a function $t\mapsto \varepsilon(t)>0$, a one-parameter family of subsets
$$S_{\varepsilon(t),t} = \bigsqcup_{i=1}^{C_t}S_t^i$$ 
of $S$ made up of $C_t < \infty$ path-connected components, each homeomorphic to a closed orientable surface with boundary, and functions $t\mapsto \mathcal{D}_t^i>0$, for $1\leq i \leq C_t$, such that for 
$$\Gamma_t^{i,j} = \{\mbox{paths connecting }\partial S_t^i \mbox{ to }\partial S_t^j\}$$
and
\begin{equation}
\label{eqn:systole}
\delta_t = \min_{i\neq j} \sup_{\gamma\in\Gamma_t^{i,j} }\mbox{dist}_t(\gamma,\Sigma)
\end{equation}
the following hold:
\begin{enumerate}
\item  $\omega_\alpha(S\backslash S_{\varepsilon(t), t}) < \eta$ for all $t>0$,
\item $\mbox{dist}_t(\partial S_{\varepsilon(t),t},\Sigma) > \varepsilon(t)$ for all $t>0$,
\item the diameter of each $S_t^i$, measured with respect to the flat metric on $(S,\alpha_t)$, is bounded by $\mathcal{D}_t^i$ and
\begin{equation}
\label{eqn:integrability2}
\int_0^\infty \left( \varepsilon(t)^{-2}\sum_{i=1}^{C_t}\mathcal{D}_t^i + \frac{C_t-1}{\delta_t}\right)^{-2}\, dt = +\infty.
\end{equation}
\end{enumerate}
Moreover, suppose the set of points whose translation trajectories leave every compact subset of $S$ has zero measure. Then the translation flow is ergodic.
\end{theorem}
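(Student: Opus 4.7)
The strategy is to argue by contradiction in the spirit of the Masur--Katok--Smillie criterion, adapted to the possibly non-compact setting. Assume the vertical flow $\varphi^Y_t$ is not ergodic, so there exists a $\varphi^Y_t$-invariant measurable set $E\subset S$ with $0 < \omega_\alpha(E) < 1$. I would encode $E$ as cocycle data: on each compact piece $S_t^i$, whose boundary stays at distance at least $\varepsilon(t)$ from $\Sigma$ by hypothesis (ii), define the flux
\[
c_t^i(\gamma) \;=\; \int_\gamma \chi_E \cdot d(\mathrm{Im}\,\alpha)
\]
along horizontal segments $\gamma \subset S_t^i$. Invariance of $E$ under $\varphi^Y_t$ makes $c_t^i$ invariant under vertical translation, so it descends to a cohomological object on the finite-genus surface-with-boundary $S_t^i$ paired against horizontal cycles. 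Non-ergodicity is equivalent to this object failing to be cohomologous to a constant multiple of $\mathrm{Im}\,\alpha$ on a set of definite $\omega_\alpha$-mass.

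Next I would estimate the deviation $c_t^i(\gamma) - \omega_\alpha(E)\,|\gamma|$ in the renormalized metric of $(S,\alpha_t)$. On one piece of diameter at most $\mathcal{D}_t^i$ whose boundary keeps distance $\varepsilon(t)$ from $\Sigma$, a Poincar\'e / trace inequality on the compact surface-with-boundary $S_t^i$ bounds such a deviation by $C\,\varepsilon(t)^{-2}\mathcal{D}_t^i$ times an $L^2$-norm of the cocycle. Stitching across different pieces requires horizontal connectors: by definition of $\delta_t$ in (\ref{eqn:systole}), for each pair $(i,j)$ one may choose a path in $\Gamma_t^{i,j}$ staying at distance at least $\delta_t$ from $\Sigma$, contributing an extra term proportional to $(C_t-1)/\delta_t$. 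This produces the single geometric scale
\[
N(t) \;=\; \varepsilon(t)^{-2}\sum_{i=1}^{C_t}\mathcal{D}_t^i \;+\; \frac{C_t-1}{\delta_t}
\]
dominating the total $L^2$-deviation at renormalization time $t$, while hypothesis (i) ensures the complementary set $S\setminus S_{\varepsilon(t),t}$ carries negligible mass and so cannot obstruct the cocycle reconstruction of $E$.

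The final step is a second-moment / Borel--Cantelli argument in $t$: the probability that Birkhoff averages of $\chi_E$ along long horizontal segments deviate from $\omega_\alpha(E)$ by a fixed amount decays at least like $N(t)^{-2}$, and the recurrence hypothesis in the ``moreover'' clause guarantees that almost every horizontal orbit repeatedly enters $S_{\varepsilon(t),t}$ for arbitrarily large $t$. Divergence of the integral (\ref{eqn:integrability2}) then forces, via a Paley--Zygmund type argument, these Birkhoff averages to converge a.e.\ to $\omega_\alpha(E)$, contradicting the non-triviality of $E$. The main obstacle is the passage from the local Poincar\'e estimate on each $S_t^i$ to a single global bound: the classical compact finite-genus case corresponds to $C_t = 1$ and reduces essentially to Masur's original cylinder-decomposition argument, but for $C_t \to \infty$ one must carefully account for the error accumulated in stitching the pieces through the thin part $S\setminus S_{\varepsilon(t),t}$, which is exactly the role of the additive $(C_t-1)/\delta_t$ term; calibrating this term against the interior contribution inside a single square is the delicate analytical point and the reason the particular combination appearing in (\ref{eqn:integrability2}) is natural.
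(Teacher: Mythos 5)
You should first be aware that the paper does not prove this statement at all: it is quoted verbatim as an external result from \cite{rodrigo:ergodicity} (``we will invoke the following theorem from...''), so there is no internal proof to compare against. Judged on its own terms, your outline has the right overall shape --- control the deviation of horizontal Birkhoff averages of $\chi_E$ from $\omega_\alpha(E)$ using the geometry of the deformed surfaces $(S,\alpha_t)$ restricted to the good sets $S_{\varepsilon(t),t}$, and let the divergence of (\ref{eqn:integrability2}) force the deviation to zero --- but the two places where all the analytic content lives are asserted rather than argued, so this is a sketch of a strategy, not a proof.

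Concretely: (a) the claim that ``a Poincar\'e / trace inequality on $S_t^i$'' bounds the deviation of the flux cocycle by $C\,\varepsilon(t)^{-2}\mathcal{D}_t^i$ is reverse-engineered from the statement; you give no derivation of why the exponent on $\varepsilon(t)$ is $-2$, why the diameters enter linearly and additively over $i$, or why the connector term is $(C_t-1)/\delta_t$ rather than, say, $\sum_{i\neq j}\delta_t^{-1}$. Since the entire content of the theorem is the precise combination appearing in the integrand, deriving that combination is the proof, and it is missing. (b) The closing ``second-moment / Borel--Cantelli / Paley--Zygmund'' step is where divergence of $\int N(t)^{-2}\,dt$ must be converted into an almost-everywhere statement, and a divergent-sum Borel--Cantelli argument requires (quasi-)independence of the deviation events at different times $t$; you do not say where this independence comes from, and without it the divergence hypothesis yields nothing. (c) You use the ``moreover'' hypothesis to assert that a.e.\ horizontal orbit ``repeatedly enters $S_{\varepsilon(t),t}$ for arbitrarily large $t$''; the hypothesis only says a.e.\ orbit does not escape to infinity, and since the sets $S_{\varepsilon(t),t}$ vary with $t$ (and may drift around the non-compact surface), the recurrence you need must be extracted from hypothesis (i) by a separate measure estimate, not read off directly. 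Until (a) and (b) are supplied, the proposal does not establish the theorem.
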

\begin{definition}
Let $\mathcal{B} = (\mathcal{E},\mathcal{V})$ be a Bratteli diagram and let $k\in\mathbb{Z}$. Suppose that for any $v,w\in \mathcal{V}_k,$ there exists a $k_{v,w} = \Delta^k_\mathcal{B}(v,w)$ and paths $p, q \in \mathcal{E}_{k,k + k_{v,w}}$ with $s(p) = v, s(q) = w$ and $r(p) = r(q)$. For any $k\in\mathbb{Z}$, the \textbf{metamour function} is the function $\Delta_{\mathcal{B}}^k: \mathcal{V}_k\times \mathcal{V}_k\rightarrow \mathbb{N}\cup\{\infty\},$  $(v,w)  \mapsto k_{v,w} = \Delta^k_{\mathcal{B}}(v,w)\in  \mathbb{N}\cup\{\infty\}$ defined, for $k\in\mathbb{Z}$ and $v,w\in\mathcal{V}_k$, as the minimum $k_{v,w}$ for which there exists paths $p, q \in \mathcal{E}_{k,k + k_{v,w}}$ with $s(p) = v, s(q) = w$ and $r(p) = r(q)$. If no such paths exist,  $\Delta^k_\mathcal{B}(v,w) = \infty$. We also define
$$\Delta_\mathcal{B}^+(k) := \max_{v,w \in \mathcal{V}_k}\Delta_{\mathcal{B}}^k(v,w).$$
\end{definition}
The motivating idea for the above definition is the following. In the proof of the main theorem, given a level $\mathcal{V}_k$, $k\in\mathbb{N}\cup\{0\}$, we will need to find connecting paths between any two elements $v,w\in\mathcal{V}_k$. To do this, the metamour function gives the minumum number of levels down from level $k$ one must go to guarantee that we can find paths connecting vertices $v$ and $w$ going through some other elements in $\mathcal{V}_{k+\Delta^k_\mathcal{B}(v,w)}$. If $\Delta_\mathcal{B}^k(v,w) = \infty$, then this means that this is not possible, i.e., that there are vertices $v,w\in\mathcal{V}_K$ which are in (forwardly) disconnected components. This happens when there are several minimal components, or when there is a periodic component.
\begin{lemma}
\label{lem:practicalTunnel}
Let $\mathcal{B}$ be a bi-infinite Bratteli diagram. If the positive part of $\mathcal{B}$ is transitive then we have that $\Delta_\mathcal{B}^+(0) = \max_{v,w \in \mathcal{V}_0}\Delta_{\mathcal{B}}^0(v,w) < \infty$.
\end{lemma}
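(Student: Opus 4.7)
The plan is to exploit transitivity of the positive part $B^+$ by fixing a single path whose tail equivalence class is dense, and then using that path as a ``meeting place'' at some uniformly bounded level for every pair $v,w \in \mathcal{V}_0$. Transitivity gives a path $\bar{e}^* = (e_1^*, e_2^*, \ldots) \in X_{B^+}$ with $\overline{[\bar{e}^*]} = X_{B^+}$; write $v_k^* = r(e_k^*) \in \mathcal{V}_k$ for the vertex visited at level $k$ and $v_0^* = s(e_1^*)$.

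First I would translate density of $[\bar{e}^*]$ into a concrete ``joining'' statement: for every $v \in \mathcal{V}_0$ pick any edge $e_v \in \mathcal{E}_1$ with $s(e_v)=v$ (which exists because $s(\mathcal{E}_1)=\mathcal{V}_0$), and look at the cylinder set $C(e_v)$. By density of $[\bar{e}^*]$, there is a path $\bar{f}^{(v)} \in [\bar{e}^*]$ starting with $e_v$. By the definition of tail equivalence there is a level $m(v) \in \mathbb{N}$ such that $\bar{f}^{(v)}$ and $\bar{e}^*$ agree from level $m(v)$ onward; in particular $\bar{f}^{(v)}$ visits the vertex $v_{m(v)}^*$, producing a concrete finite path $p_v \in \mathcal{E}_{0,m(v)}$ with $s(p_v)=v$ and $r(p_v)=v_{m(v)}^*$.

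The second step is the uniformization: since $|\mathcal{V}_0|<\infty$, set $M := \max_{v\in\mathcal{V}_0} m(v) < \infty$. For each $v\in\mathcal{V}_0$, concatenate $p_v$ with the sub-path $(e_{m(v)+1}^*,\ldots,e_M^*)$ of $\bar{e}^*$ to obtain a path $\tilde{p}_v \in \mathcal{E}_{0,M}$ with $s(\tilde{p}_v)=v$ and $r(\tilde{p}_v)=v_M^*$. Then for any pair $v,w\in\mathcal{V}_0$, the paths $\tilde{p}_v$ and $\tilde{p}_w$ meet at the common vertex $v_M^* \in \mathcal{V}_M$, which shows $\Delta^0_{\mathcal{B}}(v,w) \le M$ and therefore $\Delta^+_{\mathcal{B}}(0) \le M < \infty$.

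There is no serious obstacle here; the only point requiring a little care is making sure density is invoked with the correct cylinder set so as to pin down the starting vertex, and observing that the finite path thereby produced does indeed land on $\bar{e}^*$ at a finite level (which is precisely the content of tail equivalence). The finiteness of $\mathcal{V}_0$ then does the rest.
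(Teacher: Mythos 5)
Your proof is correct and follows essentially the same route as the paper's: fix a path $\bar{e}^*$ with dense tail class, use density to produce for each $v\in\mathcal{V}_0$ a path starting at $v$ that merges with $\bar{e}^*$ at some finite level, take the maximum of these levels over the finite set $\mathcal{V}_0$, and use the vertex of $\bar{e}^*$ at that level as the common meeting point. The only cosmetic difference is that you invoke density on an explicit one-edge cylinder and then unpack tail equivalence, whereas the paper states the merging property directly; the substance is identical.
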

\begin{proof}
Let $B^+$ be the positive part of $\mathcal{B}$ and let $\bar{e}^*\in X_{B^+}$ be a representative of the dense tail equivalence relation, i.e., $\overline{[\bar{e}^*]}= X_{B^+}$.

Since the class of $\bar{e}^* = (e_1^*,e_2^*,\dots)$ is dense in $X_{B^+}$, for any $v\in V_0$ and $m\in \mathbb{N}$, there exists a $m^*_{v,m}\geq m$ and path $\bar{e}^{v,m} = (e_1^{v,m},e_2^{v,m},\dots)\in X_{B^+}$ with $s(\bar{e}^{v,m}) = v$ such that $e_k^{v,m} = e_k^*$ for all $k\geq m^*_{v,m}$. Define $\varpi = \max_{v\in V_0} m^*_{v,1}$ and consider the $|V_0|$ paths $\bar{e}^{1,1},\dots, \bar{e}^{|V_0|,1} \in X_{B^+}$, that is, paths for $m = 1$ and starting at all possible vertices in $V_0$. As such, we have that $e_k^{v,1} = e_k^*$ for all $k\geq \varpi$.

Now given any $v,w\in V_0$, we follow the path $\bar{e}^{v,1}$ down until $k = \varpi$, where we know that $r(e_\varpi^{v,1}) = r(e_\varpi^{w,1})$, and we follow $\bar{e}^{w,1}$ back up to $w\in V_0$. It follows that $\Delta_{\mathcal{B}}^0(v,w) \leq \varpi$. 
\end{proof}
\begin{lemma}
  \label{lem:minimality}
  Suppose that for $\mathcal{B}\in\beth$ there is a subsequence $k_i\rightarrow\infty$ such that $\sigma^{k_i}(\mathcal{B})\rightarrow \mathcal{B}^*$ for some $\mathcal{B}^*\in\beth$ whose positive part is minimal. Then the positive part of $\mathcal{B}$ is minimal.
\end{lemma}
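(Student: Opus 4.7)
The plan is to reduce minimality of the positive part of $\mathcal{B}$ to the statement that the metamour function is finite at every level. Specifically, I would first argue that if $\Delta_{\mathcal{B}}^m(v,w)<\infty$ for every $m\in\mathbb{N}\cup\{0\}$ and every $v,w\in\mathcal{V}_m$, then every tail equivalence class of the positive part of $\mathcal{B}$ is dense in the path space. Indeed, given a path $\bar{e}=(e_1,e_2,\dots)$ and a cylinder $C(f_1,\dots,f_n)$ ending at $w=r(f_n)$, the finiteness of $\Delta_{\mathcal{B}}^n(w,r(e_n))$ produces a common vertex $u$ at some level $N>n$ together with paths joining $w$ and $r(e_n)$ to $u$; splicing the tail of $\bar{e}$ from level $N$ onto the extended prefix produces an element of $[\bar{e}]\cap C(f_1,\dots,f_n)$. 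Density of every tail class simultaneously gives transitivity and rules out periodic components (a periodic component is a finite, hence closed, tail class, which cannot be dense in the uncountable path space), so minimality follows.

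The second step uses the hypothesis. By minimality (hence transitivity) of the positive part of $\mathcal{B}^*$, Lemma~\ref{lem:practicalTunnel} supplies $N:=\Delta^+_{\mathcal{B}^*}(0)<\infty$, so any two vertices of $\mathcal{V}_0^*$ merge through a common vertex at level $N$ via paths built only from edges in $\mathcal{E}^*_1,\dots,\mathcal{E}^*_N$. Now fix $m\in\mathbb{N}\cup\{0\}$ and $v,w\in\mathcal{V}_m$. Since $\sigma^{k_i}(\mathcal{B})\to\mathcal{B}^*$ in the cylinder topology on $\beth$, I choose $i$ with $k_i>m$ large enough that the transition matrices $\mathcal{F}_{k_i+1},\dots,\mathcal{F}_{k_i+N}$ of $\mathcal{B}$ are identical to $\mathcal{F}_1^*,\dots,\mathcal{F}_N^*$; in particular, the subdiagram of $\mathcal{B}$ between levels $k_i$ and $k_i+N$ is canonically identified with the first $N$ positive levels of $\mathcal{B}^*$.

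Every vertex of a Bratteli diagram has at least one outgoing edge by the surjectivity of $s$, so I can greedily extend edges from $v$ and from $w$ down to some pair $v',w'\in\mathcal{V}_{k_i}$. Under the identification $\mathcal{V}_{k_i}\leftrightarrow\mathcal{V}_0^*$, the merging provided by $\mathcal{B}^*$ brings $v'$ and $w'$ together at level $N$, and because the finite subdiagrams coincide this merging transports back to $\mathcal{B}$ at level $k_i+N$. Concatenating the initial descents with these merging paths gives $\Delta_{\mathcal{B}}^m(v,w)\le(k_i-m)+N<\infty$. Since $m$, $v$, and $w$ were arbitrary, the reduction in the first paragraph delivers minimality of the positive part of $\mathcal{B}$.

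The only real subtlety is the bookkeeping in the second paragraph: I am relying on the fact that convergence in $\beth$, defined via cylinder sets on $\mathfrak{M}^\mathbb{Z}$ in \S\ref{subsec:space}, yields literal equality of finite subdiagrams (edges included, via the encoding of matrices) rather than mere abstract isomorphism, so that paths constructed in $\mathcal{B}^*$ transfer to $\mathcal{B}$ with no re-labeling ambiguity. Nothing in the argument requires the negative part of $\mathcal{B}$ or the orderings $\leq_{r,s}$, as one would expect since the conclusion concerns only the positive part.
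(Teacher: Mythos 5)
Your overall strategy---use the convergence $\sigma^{k_i}(\mathcal{B})\to\mathcal{B}^*$ to pin down the transition matrices of $\mathcal{B}$ between levels $k_i$ and $k_i+N$, import the merging behaviour of $\mathcal{B}^*$ there, and then splice paths---is the same as the paper's. But the reduction claimed in your first paragraph is false, and this is a genuine gap. Finiteness of $\Delta^m_{\mathcal{B}}(v,w)$ for all $m$ and all $v,w\in\mathcal{V}_m$ does \emph{not} imply that every tail class is dense. The splice breaks exactly where you need it: the metamour function hands you a common vertex $u$ at level $N$ reachable from both $w=r(f_n)$ and $r(e_n)$, but $u$ need not equal $r(e_N)$, and the tail $(e_{N+1},e_{N+2},\dots)$ issues from $r(e_N)$, so it cannot be attached to a prefix ending at $u$. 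Concretely, take $\mathcal{V}_k=\{a_k,b_k\}$ for all $k\geq 0$ with one edge $a_k\to a_{k+1}$, one edge $a_k\to b_{k+1}$, and one edge $b_k\to b_{k+1}$, i.e.\ transition matrix $\bigl(\begin{smallmatrix}1&0\\1&1\end{smallmatrix}\bigr)$ at every level. Then $\Delta^k(a_k,b_k)=1$ for every $k$, so the metamour function is finite everywhere, yet the all-$a$ path forms a singleton (periodic) tail class that is nowhere dense: the diagram is transitive but not minimal. So the hypothesis of your first paragraph is satisfied by non-minimal diagrams and its conclusion cannot follow.

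The fix is to extract more from minimality of $\mathcal{B}^*$ than Lemma \ref{lem:practicalTunnel} gives (that lemma only uses transitivity, which is exactly why the counterexample above slips through). Minimality of the positive part of $\mathcal{B}^*$ yields \emph{full} connectivity after finitely many levels: there is an $M$ such that every vertex of $\mathcal{V}^*_0$ is joined by a path to \emph{every} vertex of $\mathcal{V}^*_M$. (If some $v\in\mathcal{V}^*_0$ failed to reach some vertex at every level, then, since $r(E_k)=V_k$ guarantees every vertex has a predecessor, K\"onig's lemma produces an infinite path $\bar g$ running entirely through vertices unreachable from $v$; any path tail-equivalent to $\bar g$ eventually passes through such vertices, so $[\bar g]$ misses every cylinder based at $v$, contradicting minimality.) Transport this full connectivity to the block of $\mathcal{B}$ between levels $k_i$ and $k_i+M$ exactly as in your second paragraph; then the splice goes through, because you can now connect the (arbitrarily extended) endpoint of the cylinder at level $k_i$ to the \emph{specific} vertex $r(e_{k_i+M})$ lying on the target path $\bar e$, and follow $\bar e$ from there. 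With that replacement your argument is sound; I note that the paper's own write-up invokes only the metamour function at this step and needs the same strengthening.
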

\begin{proof}
  Denote $\Delta \equiv \Delta_{\mathcal{B}^*}^+(0)$ which is finite by Lemma \ref{lem:practicalTunnel}. Since the space $\beth$ is topologized by cylinder sets coming from the coding of $\mathfrak{M}$ into a countable alphabet, we can assume without loss of generality that $\mathcal{F}^{k_i}_l = \mathcal{F}^*_l$ for $-i \leq l \leq i$. Therefore, it follows that $\Delta^+_\mathcal{B}(k_i) = \Delta_{\mathcal{B}^*}^+(0)$ for all $i>\Delta$.

Let $\bar{x} = (x_1,x_2,\dots)\in X_{\mathcal{B}^+}$ and $\bar{x}' = (x_1',x_2',\dots)\in X_{\mathcal{B}^+}$ be two paths. To show minimality, we need to show that for any $K>0$ there is a path $\bar{y} = (y_1,y_2,\dots)\in X_{\mathcal{B}^+}$ and an integer $K'>0$ with the property that $y_j = x_j$ for all $j\leq K$ and $y_j = x_j'$ for all $j>K'$.

Let $K>0$ be given and let $i_K>\Delta$ be an integer such that $k_{i_K} > K$. Let $v_{\bar{x}} = r(x_{k_{i_K}})\in \mathcal{V}_{k_{i_K}}$ and $v_{\bar{x}'} = r(x_{k_{i_K}}')\in \mathcal{V}_{k_{i_K}}$. By the convergence $\sigma^{k_i}(\mathcal{B})\rightarrow \mathcal{B}^*$, that $i_K>\Delta$, and by Lemma \ref{lem:practicalTunnel}, $\Delta_{\mathcal{B}}^{k_{i_K}}(v_{\bar{x}}, v_{\bar{x}'})$ is finite. Therefore, there are paths $p_{\bar{x}},q_{\bar{x}'} \in\mathcal{E}_{k_{i_K}, k_{i_K}+\Delta_{\mathcal{B}}^{k_{i_K}}(v_{\bar{x}}, v_{\bar{x}'})}$ with $s(p_{\bar{x}}) = v_{\bar{x}}$, $s(q_{\bar{x}'}) = v_{\bar{x}'}$ and $r(p_{\bar{x}}) = r(q_{\bar{x}'})$.

So there is a path $\bar{y} = (y_1,y_2,\dots)\in X_{\mathcal{B}^+}$ with the following properties: $y_j = x_j$ for all $j\in \{1,\dots, k_{i_K} \}$, $\bar{y}$ coincides with $q_{\bar{x}'}$ for $j\in\{k_{i_K}+1,k_{i_K}+\Delta_{\mathcal{B}}^{k_{i_K}}(v_{\bar{x}}, v_{\bar{x}'})\}$, and coincides with the tail of $\bar{x}'$. So $\mathcal{B}^+$ is minimal.
\end{proof}
\subsection{Proof of Theorem \ref{thm:main}}
\label{subsec:proof1}
For the Bratteli diagram $\mathcal{B}$, let $w^\pm$ be weight functions on the positive and negative parts, respectively (Lemma \ref{lem:existence}). Recalling the conventions of \S \ref{subsec:renorm}, we let $w^\pm_k$ be the weight functions on the Bratteli diagram $\sigma^k(\mathcal{B})$ obtained by shifting those on $\mathcal{B}$.

Since the orbit of $\mathcal{B}$ has an accumulation point, there is a subsequence $k_i$ such that $\mathcal{B}_{k_i} \equiv \sigma^{k_i}(\mathcal{B}) \rightarrow \mathcal{B}^*$ for some $\mathcal{B}^* = (\mathcal{V}^*,\mathcal{E}^*)$. Let $\mathcal{F}^k_l$ be the $l^{th}$ transition matrix corresponding to the Bratteli diagram $\sigma^k(\mathcal{B})$ and $\mathcal{F}^*_l$ the $l^{th}$ matrix for the diagram $\mathcal{B}^*$. 

Denote $\Delta \equiv \Delta_{\mathcal{B}^*}^+(0)$ which is finite by Lemma \ref{lem:practicalTunnel}. Since the space $\beth$ is topologized by cylinder sets coming from the coding of $\mathfrak{M}$ into a countable alphabet, we can assume without loss of generality that $\mathcal{F}^{k_i}_l = \mathcal{F}^*_l$ for $-i \leq l \leq i$. Therefore, it follows that $\Delta^+_\mathcal{B}(k_i) = \Delta_{\mathcal{B}^*}^+(0)$ for all $i>\Delta$.

We will now find weights $w^\pm_*$ which make $(\mathcal{B}^*,w^\pm_*)$ compatible with the convergence $\sigma^{k_i}(\mathcal{B})\rightarrow \mathcal{B}^*$. Consider the countable collection of vectors
\begin{equation*}
\begin{split}
\mathbf{w}_j^i &= \left(w^+_{k_i}(v_1),\dots, w^+_{k_i}(v_{|\mathcal{V}_j^*|})\right) = \left(\bar{\ell}^{k_i}_{v_1},\dots,  \bar{\ell}^{k_i}_{v_{|\mathcal{V}_j^*|}} \right)  \in [0,1]^{|\mathcal{V}_j^*|} \\
\mathbf{h}_j^i &= \left(w^-_{k_i}(v_1),\dots, w^+_{k_i}(v_{|\mathcal{V}_{j}^*|})\right) = \left( \bar{h}^{k_i}_{v_1},\dots,  \bar{h}^{k_i}_{v_{|\mathcal{V}_{j}^*|}}\right) \in \mathbb{R}^{|\mathcal{V}_{j}^*|}_+
\end{split}
\end{equation*}
for $i>j\geq 0$. We can now find a weight function (invariant measure) for $\mathcal{B}^*$. By the convergence $\sigma^{k_i}(\mathcal{B})\rightarrow \mathcal{B}^*$, since $[0,1]^{|\mathcal{V}_0^*|}$ is compact there is a subsequence $k^0_i$ such that 
$$\mathbf{w}_0^{k^0_i} = \left(w^+_{k^0_i}(v_1),\dots, w^+_{k^0_i}(v_{|\mathcal{V}_0^*|})\right) \longrightarrow \mathbf{w}^*_0  = \left(w^+_*(v_1),\dots, w^+_*(v_{|\mathcal{V}_0^*|})\right)$$
 as $k_i^0\rightarrow \infty$.  We now proceed by recursion: suppose $\mathbf{w}_j^{k^n_i}\rightarrow \mathbf{w}^*_j$ along some subsequence $k^n_i \rightarrow \infty$ for all $0\leq j\leq n$ for some $n$. Since $[0,1]^{|\mathcal{V}_{n+1}^*|}$ is compact, there is a subsequence $k^{n+1}_i$ of $k^n_i$ such that $\mathbf{w}_j^{k^{n+1}_i}\rightarrow \mathbf{w}^*_j$ for all $0\leq j\leq n+1$. Doing this for all $n\in\mathbb{N}$, by choosing a diagonal subsequence $\kappa_n\rightarrow\infty$, we have that for any $v\in \mathcal{V}_k$, $w_{\kappa_n}^+(v)\rightarrow w^+_*(v)$ as $n\rightarrow \infty$, that is, we obtain a weight function function $w_*^+$ for (the positive part of) $\mathcal{B}^*$. 

Since $\mathbf{w}_0^{k_i^0}\cdot \mathbf{h}_0^{k_i^0}  = 1$ for all $k^0_i$ we have that $\sup \| \mathbf{h}_0^{k_i^0} \|_\infty < \infty$ and thus, without loss of generality (by passing to a further subsequence), we may assume that 
$$\mathbf{h}_0^{\kappa_n} = \left(w^-_{\kappa_n}(v_1),\dots, w^-_{\kappa_n}(v_{|\mathcal{V}_0^*|})\right) \longrightarrow \mathbf{h}^*_0  = \left(w^-_*(v_1),\dots, w^-_*(v_{|\mathcal{V}_0^*|})\right).$$
Again, since $\mathbf{w}_0^{k_i^0}\cdot \mathbf{h}_0^{k_i^0}  = 1$ for all $k^0_i$, we also have that $\mathbf{h}_0^{\kappa_n} \not\rightarrow \bar{0}$.
\subsection{Ergodicity when $\mathcal{B}^*$ has a minimal positive part}
Suppose that the accumulation point $\mathcal{B}^*$ has a minimal positive part. By minimality, it follows that $w^+_*$ is non-atomic. We claim that $\mathbf{w}^*_j \in \mathbb{R}_+^{|\mathcal{V}_j^*|}$ for all $j\geq 0$. Indeed, suppose that for some $\mathbf{w}^*_j = (\mathbf{w}^1_j,\dots, \mathbf{w}^{|\mathcal{V}^*_j|}_j)$ we have that $\mathbf{w}^i_j = 0$ for some $i$, i.e., $w^+_*(v_i) = 0$ for $v_i \in \mathcal{V}^*_j$. Then every tower corresponding to some $v'\in \mathcal{V}^*_k$ with $k>j$ which is connected to $v_i$ by some path must also satisfy $w^+_*(v') = 0$. By the invariance condition, all other paths ending in $r^{-1}(v')$ must also have zero weight. In particular, if there is a path from $v''\in \mathcal{V}_0$ to $v'$, then $w^+_*(v'') = 0$. It follows by minimality that $w^+_*(v) = 0$ for all $v\in \mathcal{V}_0$, a contradiction to $w^+$ being a probability measure.

We now want to apply Theorem \ref{thm:integrability} to the surface $S(\mathcal{B},w^\pm, \leq_{r,s})$, where $\leq_{r,s}$ is any ordering. Note that by Lemma \ref{lem:minimality} the positive part of $\mathcal{B}$ is minimal, which implies that the vertical flow on $S(\mathcal{B},w^\pm, \leq_{r,s})$ is minimal in the sense that any point on the surface whose orbits are defined for all time have a dense orbit (this is shown in \cite{LT}). We will see that the ordering will be irrelevant for ergodicity, and thus the proof is independent of ordering. Let $\kappa_n\rightarrow \infty$ be the sequence chosen so that $w^+(v)\rightarrow w_*^+(v)$, and let $t_{\kappa_n}\rightarrow \infty$ be the sequence of times defined by the subsequence $\kappa_n$ and (\ref{eqn:rTimes}). Without loss of generality, we can also assume that $\inf (t_{\kappa_{n+1}} - t_{\kappa_n}) > 0$. By Proposition \ref{prop:shift}, $S(\sigma^{\kappa_n}(\mathcal{B},w^\pm,\leq_{r,s}))$ is isometric to $g_{t_{\kappa_n}}S(\mathcal{B},w^\pm,\leq_{r,s})$. Let $\eta>0$ be given.

There is a partition $\mathcal{V}_0^* = \mathcal{G}_0 \sqcup \mathcal{H}_0$, where the vertices $v\in \mathcal{G}_0$ correspond to vertices $v\in \sigma^{\kappa_n}(\mathcal{V}_{\kappa_n}^*)$ with the property that $w_{\kappa_n}^-(v)\not\rightarrow 0$ and the vertices $v\in \mathcal{H}_0$ correspond to vertices $v\in \sigma^{\kappa_n}(\mathcal{V}_{\kappa_n}^*)$ with the property that $w_{\kappa_n}^-(v)\rightarrow 0$. Note that $\mathcal{G}_0 \neq \varnothing$. 

Let $n$ be large enough so that $n>\Delta$ and that
\begin{equation}
\label{eqn:weightConv}
\max_{v\in \sigma^{\kappa_n+j}(\mathcal{V}_{\kappa_n+j})}\left|   w_*^+(v)  - w_{\kappa_n}^+(v) \right| \leq 10^{-2} \min_{v\in \mathcal{V}_j^*}w^+_*(v),
\end{equation}
for all $j\in\{0,\dots, \Delta\}$, where we identify $v\in\sigma^{\kappa_n+j}(\mathcal{V}_{\kappa_n+j})$ with its corresponding vertex $v\in\mathcal{V}_j^*$ given by the convergence. Recall that the surface $g_{t_{\kappa_n}}S(\mathcal{B},w^\pm,\leq_{r,s})$ is isometric to the surface $S(\sigma^{\kappa_n}(\mathcal{B},w^\pm,\leq_{r,s}))$ and that by (\ref{eqn:RecIds})
\begin{equation}
\label{eqn:moreRect}
S(\sigma^{\kappa_n}(\mathcal{B},w^\pm,\leq_{r,s})) = \left(\bigcup_{i=1}^{|\mathcal{V}_0^*|} R_i^n \backslash \Sigma   \right) / \sim_n ,
\end{equation}
where, by (\ref{eqn:rectangles}),
$$R_i^n = \left[\sum_{i<j}w^+_{\kappa_n}(v_j),\sum_{i\leq j}w^+_{\kappa_n}(v_j)\right]\times \left[\sum_{i<j}w^-_{\kappa_n}(v_j),\sum_{i\leq j}w^-_{\kappa_n}(v_j)\right]$$
and $\sim_n$ is an identification on $\bigcup_i\partial R_i^n$. For any $n$ large enough so that (\ref{eqn:weightConv}) holds and any $\varepsilon\in (0, \min_{v\in\mathcal{V}_0^*}w^+_*(v)/3)$, consider the set
\begin{equation}
\label{eqn:GoodSets}
\begin{split}
S_n^{\varepsilon}(\mathcal{B},w^\pm, \leq_{r,s}) &= S^{\varepsilon}(\sigma^{\kappa_n}(\mathcal{B},w^\pm, \leq_{r,s})) \\
&:= \left\{ z\in S(\sigma^{\kappa_n}(\mathcal{B},w^\pm,\leq_{r,s})): \mbox{dist}_{t_{\kappa_n}}(z,\bigcup_{i\in \mathcal{G}_0} \partial R_i^n) \geq \varepsilon\right\} \\
&\subset S(\sigma^{\kappa_n}(\mathcal{B},w^\pm, \leq_{r,s})) = g_{t_{\kappa_n}}S(\mathcal{B},w^\pm, \leq_{r,s}),
\end{split}
\end{equation}
where $\mbox{dist}_{t_{\kappa_n}}(\cdot, \cdot)$ is the flat metric on $S(\sigma^{\kappa_n}(\mathcal{B},w^\pm, \leq_{r,s})) = g_{t_{\kappa_n}}S(\mathcal{B},w^\pm, \leq_{r,s})$ and we view $S(\sigma^{\kappa_n}(\mathcal{B},w^\pm,\leq_{r,s}))$ as a collection of rectangles with identifications on the edges as in (\ref{eqn:moreRect}). Note that as defined above, we can express the set as
$$S_n^{\varepsilon}(\mathcal{B},w^\pm, \leq_{r,s}) = \bigsqcup_{i=1}^{|\mathcal{G}_0|} S_{\varepsilon,n}^i.$$
For $\eta$ small enough and $n$ large enough, let $\varepsilon(\eta,n)$ be any number so that we have that  $\mathrm{Area}(S_n^{\varepsilon}(\mathcal{B},w^\pm, \leq_{r,s})) \geq 1- \eta$ whenever $\varepsilon\in (0,\varepsilon(\eta,n))$. This is possible since the rectangles indexed by vertices in $\mathcal{H}_0$ are degenerate, i.e., $\mathrm{Area}(R_v^{\kappa_n})\rightarrow 0$ for any $v\in\mathcal{H}_0$. To apply Theorem \ref{thm:integrability} we will first pick sets for the sequence of times $t_{\kappa_n}\rightarrow \infty$:
\begin{equation}
\label{eqn:setChoice}
S_{\varepsilon(t_{\kappa_n}),t_{\kappa_n}} =S_n^{\varepsilon}(\mathcal{B},w^\pm, \leq_{r,s}) = \bigsqcup_{i=1}^{|\mathcal{G}_0|} S_{\varepsilon,n}^i,
\end{equation}
where $\varepsilon < \varepsilon(\eta,n)$.
\begin{lemma}
\label{lem:pathDist}
Given $n>\Delta$ large enough so that (\ref{eqn:weightConv}) holds, there exists a $\delta_{\varepsilon,\Delta}$ such that for any $z_i \in S^i_{\varepsilon, n}, z_j \in S^j_{\varepsilon, n}$ with $i\neq j \in \mathcal{G}_0$ there is a path $\gamma:[0,1]\rightarrow g_{t_{\kappa_n}}S(\mathcal{B},w^\pm, \leq_{r,s})$ with $\gamma(0) = z_i$, $\gamma(1) = z_j$ and 
\begin{equation}
\label{eqn:pathDist}
\min_{\tau\in[0,1]}\mbox{dist}_{t_{\kappa_n}}(\gamma(\tau), \Sigma) \geq \delta_{\varepsilon,\Delta}.
\end{equation}
\end{lemma}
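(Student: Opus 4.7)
The plan is to combine the combinatorial metamour bound $\Delta := \Delta^+_{\mathcal{B}^*}(0) < \infty$ from Lemma \ref{lem:practicalTunnel} with the explicit cutting-and-stacking picture of $S(\sigma^{\kappa_n}(\mathcal{B}, w^\pm, \leq_{r,s}))$ to build a three-segment connecting path for each pair $i \neq j \in \mathcal{G}_0$. By the hypothesis $n > \Delta$ together with the cylinder-set matching between $\sigma^{\kappa_n}(\mathcal{B})$ and $\mathcal{B}^*$ through level $\Delta$, one obtains an integer $k_{ij} \leq \Delta$, a common target $v_{ij}^* \in \mathcal{V}_{k_{ij}}^*$, and edge-paths $p_i, p_j \in \mathcal{E}_{0, k_{ij}}$ with $s(p_i) = v_i$, $s(p_j) = v_j$ and $r(p_i) = r(p_j) = v_{ij}^*$. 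Geometrically this identifies two distinguished sub-rectangles $R^n_{p_i} \subset R^n_{v_i}$ and $R^n_{p_j} \subset R^n_{v_j}$ sitting at specific heights within the vertical tower over $v_{ij}^*$, each of width $\bar{w}^+(v_{ij}^*)$.

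The path is then $\gamma = \gamma^i_h \ast \gamma_v \ast \gamma^j_h$, where $\gamma^i_h \subset R^n_{v_i}$ is the horizontal segment at fixed vertical coordinate equal to that of $z_i$ running from $z_i$ to the horizontal midpoint $q_i$ of $R^n_{p_i}$; $\gamma_v$ is the vertical segment through the tower over $v_{ij}^*$ at fixed horizontal coordinate from $q_i$ to the horizontal-and-vertical midpoint $q_j$ of $R^n_{p_j}$; and $\gamma^j_h \subset R^n_{v_j}$ closes off horizontally to $z_j$. Using (\ref{eqn:weightConv}), the minimality of the positive part of $\mathcal{B}^*$ (which guarantees $w^+_*(v) > 0$ for every relevant $v$, as established just before the statement of this lemma), and the finiteness of the vertex set $\bigsqcup_{k \leq \Delta} \mathcal{V}_k^*$, one extracts uniform-in-$n$ positive lower bounds $w_{\min}$ on widths of all towers at levels $\leq \Delta$ and $h_{\min}$ on the heights $\bar{h}^0_{v_0}$ for $v_0 \in \mathcal{G}_0$. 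In particular the total vertical extent of the tower over $v_{ij}^*$ is bounded above by $1/w_{\min}$ by the area-one normalization, so $\gamma_v$ has length controlled purely in terms of $\Delta$.

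To lower-bound $\mathrm{dist}_{t_{\kappa_n}}(\gamma, \Sigma)$: the horizontal segments $\gamma^i_h, \gamma^j_h$ lie at distance $\geq \varepsilon$ from $\partial R^n_{v_i}$ and $\partial R^n_{v_j}$ respectively, since $z_i \in S^i_{\varepsilon, n}$ and $z_j \in S^j_{\varepsilon, n}$, and hence at distance $\geq \varepsilon$ from any $\Sigma$-points carried by those boundaries. Along $\gamma_v$, the horizontal coordinate is pinned to the midpoint of a sub-rectangle of width $\geq w_{\min}$ inside the tower over $v_{ij}^*$, yielding a horizontal gap $\geq w_{\min}/2$ to the vertical sides of the tower; each crossing of an interior horizontal edge of a base rectangle along the tower stack occurs at a smoothly identified point because the adjacent stacked sub-rectangles are vertically aligned by construction, and the nearest $\Sigma$-point on such a horizontal edge lies horizontally at least $w_{\min}/2$ away. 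Setting $\delta_{\varepsilon, \Delta} := \min(\varepsilon, w_{\min}/2, h_{\min}/2)$ then yields (\ref{eqn:pathDist}).

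The main obstacle is the distance check along $\gamma_v$: one has to argue carefully that each crossing of an interior horizontal edge by $\gamma_v$ lands in the smooth locus of the surface, and that the nearest genuine $\Sigma$-point on that edge lies at horizontal distance $\geq w_{\min}/2$. This is where the metamour bound becomes essential, as it guarantees that only finitely many vertex levels (namely $0, 1, \ldots, \Delta$) are relevant to the construction, thereby converting the combinatorial input of Lemma \ref{lem:practicalTunnel} into a uniform, $n$-independent geometric estimate.
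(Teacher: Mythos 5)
Your proposal is correct and follows essentially the same route as the paper: use the finiteness of the metamour function to find two edge-paths with a common range within $\Delta$ levels, interpret the corresponding sub-rectangles geometrically via cutting-and-stacking, and extract a uniform lower bound on the relevant tower widths from (\ref{eqn:weightConv}) together with the positivity of $w^+_*$ to get an $n$-independent $\delta_{\varepsilon,\Delta}$. Your writeup is in fact more explicit than the paper's about the three-segment path and the singularity-avoidance at the horizontal-edge crossings, but the underlying argument is the same.
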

\begin{proof}
Recall that by Proposition \ref{prop:shift} and (\ref{eqn:moreRect}) the surface $g_{t_{\kappa_n}}S(\mathcal{B},w^\pm, \leq_{r,s})$ can be defined as the union of $|\mathcal{V}_0^*|$ rectangles along with identifications on the edges of the rectangles, and that we are considering the subcollection of non-degenerate rectangles indexed by the vertices in $\mathcal{G}_0$. Let $v_i, v_j\in \sigma^{\kappa_n}(\mathcal{V}_{\kappa_n})$ represent the rectangles $R_i^n$ and $R_j^n$ for $i,j\in\mathcal{G}_0$, respectively. By the convergence $\sigma^{\kappa_n}(\mathcal{B})\rightarrow \mathcal{B}^*$ and Lemma \ref{lem:practicalTunnel} we have that $\Delta_{\mathcal{B}}^+(\kappa_n) = \Delta < \infty$ for all $n$ large enough. This means that there exist paths $p, q \in \mathcal{E}_{\kappa_n,\kappa_n + \Delta} $ with $s(p) = v, s(q) = w$ and $r(p) = r(q)$.

The surface $S(\sigma^{\kappa_n+\Delta}(\mathcal{B},w^\pm, \leq_{r,s})) = g_{t_{\kappa_n+\Delta}}S(\mathcal{B},w^\pm, \leq_{r,s})$ is obtained from $S(\sigma^{\kappa_n}(\mathcal{B},w^\pm, \leq_{r,s}))$ by deforming $S(\sigma^{\kappa_n}(\mathcal{B},w^\pm, \leq_{r,s}))$ by $g_{t_{\kappa_n + \Delta} - t_{\kappa_n}}$, cutting of each $R^{\kappa_n}_v$ into rectangles and stacking them into the rectangles $R^{\kappa_n+\Delta}_{v'}$, $v'\in \sigma^{\kappa_n}(\mathcal{V}_{\kappa_n+\Delta})$. As such, a path $p\in\mathcal{E}_{\kappa_n,\kappa_n+\Delta}$ guaranteed by the finiteness of the metamour function represents a class of paths going from rectangle $R^{\kappa_n}_{s(p)}$ to $R^{\kappa_n+\Delta}_{r(p)}$. The same holds for the path $q$. Since the width of $R^{\kappa_n +\Delta}_{s(p)}$ with respect to the flat metric on $S(\sigma^{\kappa_n}(\mathcal{B},w^\pm, \leq_{r,s})) = g_{t_{\kappa_n}}S(\mathcal{B},w^\pm, \leq_{r,s})$ is no less than
$$\min_{v\in \sigma^{\kappa_n}(\mathcal{V}_{\kappa_n+\Delta})}w^+_{\kappa_n}(v)$$
and we picked $n$ large enough so that (\ref{eqn:weightConv}) holds, there is a path $\gamma$ from $R^{\kappa_n}_{s(p)}$ to $R^{\kappa_n+\Delta}_{s(q)}$ which satisfies 
$$\mbox{dist}_{t_{\kappa_n}}(\gamma, \Sigma) \geq \min \left\{ \varepsilon, \min_{v\in \sigma^{\kappa_n}(\mathcal{V}_{\kappa_n+\Delta})} \frac{w^+_{\kappa_n}(v)}{3}  \right\}.$$
Setting
$$\delta_{\varepsilon,\Delta} := \min \left\{ \varepsilon, \min_{v\in \mathcal{V}_{\Delta}^*} \frac{w^+_{*}(v)}{4}  \right\},$$
the result follows by (\ref{eqn:weightConv}).
\end{proof}
For the sequence of times $t_{\kappa_n}\rightarrow\infty$, we choose $\varepsilon(t_{\kappa_n}) = \varepsilon$ and sets $S_{\varepsilon(t_{\kappa_n}),t_{\kappa_n}}$ by (\ref{eqn:setChoice}). We can readily see that we can bound the diameter of each rectangle $R^{\kappa_n}_v$ by $4(\|\mathbf{w}_0^*\|_\infty+ \|\mathbf{h}_0^*\|_\infty)$ with respect to the flat metric on $g_{t_{\kappa_n}}S(\mathcal{B},w^\pm, \leq_{r,s})$, so we choose $\mathcal{D}_{t_{\kappa_n}}^i = 4(\|\mathbf{w}_0^*\|_\infty+ \|\mathbf{h}_0^*\|_\infty)$ for all $i$ and $\delta_{t_{\kappa_n}} = \delta_{\varepsilon,\Delta}$ for all $n$ large enough.

With these choices of $\eta,\varepsilon(t_{\kappa_n}),\mathcal{D}_{t_{\kappa_n}}^i,$ and, by Lemma \ref{lem:pathDist}, $ \delta_{t_{\kappa_n}}$ we satisfy the hypotheses (i)-(iii) in Theorem \ref{thm:integrability}. Moreover, since they are constant,
\begin{equation}
\label{eqn:discreteDiv}
\sum_{n>0} \left( \varepsilon(t_{\kappa_n})^{-2}\sum_{i=1}^{|\mathcal{G}_0|}\mathcal{D}_{t_{\kappa_n}}^i + \frac{|\mathcal{G}_0|-1}{\delta_{t_{\kappa_n}}}\right)^{-2} =   
\sum_{n>0} \left( \frac{4|\mathcal{G}_0|(\|\mathbf{w}_0^*\|_\infty+ \|\mathbf{h}_0^*\|_\infty)}{\varepsilon^2} + \frac{|\mathcal{G}_0|-1}{\delta_{\varepsilon,\Delta}}\right)^{-2} = \infty.
\end{equation}
Let $\mu > 0$ be such that $2\mu \ll \inf(t_{\kappa_{n+1}})$. Using the Teichm\"uller deformation, we deform the sets $S^\varepsilon_n(\mathcal{B},w^\pm,\leq_{r,s})$ for time intervals of width $\mu/2$ around each $t_{\kappa_n}$. As such, the geometric properties of the sets are closely related to those of the sets $S^\varepsilon_n(\mathcal{B},w^\pm,\leq_{r,s})$. In particular,
for $t\in (t_{\kappa_n} - \frac{\mu}{2}, t_{\kappa_n} + \frac{\mu}{2})$, if we define
\begin{equation}
\label{eqn:intervalBnds}
\mathcal{D}_t^i = e^{\frac{\mu}{2}}\mathcal{D}_{t_{\kappa_n}}^i \mbox{ for all }i,\,\, \varepsilon(t) = e^{-\frac{\mu}{2}}\varepsilon,\,\, \mbox{ and } \delta_t = e^{-\frac{\mu}{2}}\delta_{\varepsilon,\Delta}
\end{equation}
we satisfy conditions (i)-(iii) in Theorem \ref{thm:integrability} for $t$ in these intervals. Making any choice of sets $S_{\varepsilon(t),t}$ for $t\in (\mathbb{R}\backslash \bigcup_n(t-\frac{\mu}{2},t+\frac{\mu}{2}))$ we have that
\begin{equation}
\label{eqn:bigDivergence}
\begin{split}
\int_0^\infty \left( \varepsilon(t)^{-2}\sum_{i=1}^{C_t}\mathcal{D}_t^i + \frac{C_t-1}{\delta_t}\right)^{-2}\, dt &\geq \sum_{n>0} \int_{t_{\kappa_n}-\frac{\mu}{2}}^{t_{\kappa_n}+\frac{\mu}{2}} \left( \varepsilon(t)^{-2}\sum_{i=1}^{C_t}\mathcal{D}_t^i + \frac{C_t-1}{\delta_t}\right)^{-2}\, dt = +\infty \\
&\geq \mu\sum_{n>0}\left(  e^{\frac{3\mu}{2}} \varepsilon^{-2}  \sum_{i=1}^{|\mathcal{G}_0|}\mathcal{D}_{t_{\kappa_n}}^i + e^{\frac{\mu}{2}}\frac{|\mathcal{G}_0|-1}{\delta_{t_{\kappa_n}}}     \right)^{-2}\\
&\geq \mu e^{3\mu} \sum_{n>0} \left( \frac{4|\mathcal{G}_0|(\|\mathbf{w}_0^*\|_\infty+ \|\mathbf{h}_0^*\|_\infty)}{\varepsilon^2} + \frac{|\mathcal{G}_0|-1}{\delta_{\varepsilon,\Delta}}\right)^{-2} \\
&= \infty,
\end{split}
\end{equation}
where we used (\ref{eqn:discreteDiv}). Therefore, by Theorem \ref{thm:integrability}, the vertical flow on $S(\mathcal{B},w^\pm,\leq_{r,s})$ is ergodic.
\subsection{Upgrade}
\label{subsec:upgrade}
In this section we upgrade the ergodicity result obtained in the previous section to unique ergodicity, that is, we will show that the Lebesgue measure $\omega_\alpha = \omega_{(\mathcal{B},w^\pm,\leq_{r,s})}$ is the only invariant probability measure for the vertical flow. Let us suppose that $\mu$ is another invariant measure. Since the flow is minimal, $\mu$ is not atomic. The measures $\omega_\alpha$ and $\mu$ are mutually singular. Define the one parameter family of measures $\mu_s = s\omega_\alpha + (1-s)\mu$ for $s\in [0,1]$. All of these measures are invariant under the vertical flow on the surface $S(\mathcal{B},w^\pm,\leq_{r,s})$. We will show that $\mu_s$ is an ergodic measure for some $s\in(0,1)$, contradicting that ergodic measures cannot be convex combinations of other ergodic measures.

Let $s\in (0,1)$ be fixed. For $p\in S\backslash\Sigma$ and $\lambda>0$, define $L_{\lambda}^\alpha(p)$ to be a connected subset of the leaf of the horizontal foliation of $\alpha$ of length $\lambda$ (with respect to the flat metric) with $p$ at its ``left'' point. In other words,
$$L^\alpha_{\lambda}(p) = \bigcup_{\tau\in[0,\lambda]}\varphi^X_\tau(p).$$
Let $R_{\beta,\lambda}^\alpha(p)$ be the rectangle with $p$ at its lower-left corner with vertical and horizontal side-lengths $\lambda$ and $\beta$, respectively, with respect to the flat metric. More precisely,
$$R_{\beta,\lambda}^\alpha(p) = \bigcup_{\substack{r\in[0,\beta]\\ \tau\in[0,\lambda]}} \varphi^Y_r\circ\varphi^X_\tau(p).$$
Without loss of generality we implicitly assume that $\beta$ and $\lambda$ are small enough such that $R_{\beta,\lambda}(p)\cap\Sigma = \varnothing$.

Note that any invariant measure $\mu_*$ for the translation flow is locally a product measure $\mu_* = m_x\times m_y$ of measures $m_x$ on the horizontal leaves and $m_y$ on the vertical leaves of the foliation. Since the measure a small rectangle $R^\alpha_{\beta,\lambda}$ as above is invariant under the vertical flow (which locally is a translation) the measure in the flow direction is always the Lebesgue measure: $m_y = \mathrm{Leb}_y$.

Let $\mu$ be the other ergodic invariant probability measure for the vertical flow. By the remark above, we have that $\mu$ is locally of the form $\mu =   \Upsilon\times \mathrm{Leb}_y$, where $\Upsilon$ is transverse measure to the vertical flow on the horizontal leaves. As such, the convex combination $\mu_s$ is locally of the form $ \Upsilon_s \times \mathrm{Leb}_y$ for some measure $\Upsilon_s$ on the horizontal leaves. The measure of horizontal segments is then
$$\Upsilon_s(L_{\lambda}^\alpha(p)) = \lim_{\beta\rightarrow0}\frac{1}{\beta}\mu_s(R_{\beta,\lambda}^\alpha(p)).$$
Since
$$\mu_s(R_{\beta,\lambda}^\alpha(p)) = s\beta\lambda + (1-s)\mu(R_{\beta,\lambda}(p))\geq s\beta\lambda$$
it follows that
\begin{equation}
\label{eqn:vertLength}
\Upsilon_s(L_\lambda^\alpha(p)) = s\lambda + \lim_{\beta\rightarrow 0}\frac{1}{\beta}(1-s)\mu(R_{\beta,\lambda}(p))\geq s\lambda.
\end{equation}
As such, (\ref{eqn:vertLength}) gives a lower bound to transverse lengths of curves with respect to the measure $\Upsilon_s$. 

Construct the homeomorphism $\Phi_s: S\rightarrow S$ as follows. For a point in $z\in S\backslash \Sigma$ and using $w = x+iy$ as a local coordinate (identifying $z$ with zero in these local coordinates), the map $\Phi_s$ takes the local form 
$$\Phi_s:w\mapsto \Phi_s(w) = sign(x)\cdot  \Upsilon_s(L_{x}^\alpha(z)) + iy.$$ 
The map $\Phi_s$ induces a map on the measure $\mu_s$ which sends it to $\mu^s_*\equiv (\Phi_s)_*\mu_s $, the Lebesgue measure in $\Phi_s(S)$. Moreover, the map induces a new translation structure on $S$ and preserves the smooth foliation $\mathcal{F}_\alpha^v$. Considering the transverse foliations $\mathcal{F}^h_\alpha$ and $\mathcal{F}^v_\alpha$ with respect to this new translation structure obtained through the map $\Phi_s$, there exists an Abelian differential $\alpha_s$ such that $\mathcal{F}^{h,v}_\alpha = \mathcal{F}^{h,v}_{\alpha_s}$. By construction, $\mu_*^s = \omega_{\alpha_s}$. We denote this surface as $(S_\mathcal{B},\alpha_s)$.

We now compare the geometry of the sets (\ref{eqn:GoodSets}) under the flat metric given by $\alpha_s$ to the geometry of the sets given by the metric corresponding to $\alpha$.

\begin{lemma}
\label{lem:2areas}
For any $\eta > 0$ there exists a $\varepsilon>0$ such that the sets $S^\varepsilon_{\kappa_n}$ defined by (\ref{eqn:GoodSets}) and the subsequence $\kappa_n\rightarrow \infty$ through which the geometry of the surfaces $S(\sigma^{\kappa_n}(\mathcal{B},w^\pm,\leq_{r,s}))$ converges, satisfy
$\omega_\alpha(S^\varepsilon_n) \geq 1-\eta\mbox{ and }\mu^s_*(S^\varepsilon_{\kappa_n}) \geq 1-\eta$ for all $n$.
\end{lemma}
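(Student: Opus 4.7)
The plan is to decompose the complement $S \setminus S^\varepsilon_{\kappa_n}$ as the disjoint union $A_n \cup B_n^\varepsilon$, where $A_n = \bigsqcup_{v \in \mathcal{H}_0} R_v^n$ collects the degenerate rectangles and $B_n^\varepsilon$ is the $\mathrm{dist}_{t_{\kappa_n}}$-$\varepsilon$-neighborhood of $\bigcup_{i \in \mathcal{G}_0} \partial R_i^n$, and to bound each measure on each piece. For $\omega_\alpha$, the diagonal convergence $\mathbf{w}_0^{\kappa_n} \to \mathbf{w}_0^*$, $\mathbf{h}_0^{\kappa_n} \to \mathbf{h}_0^*$ gives uniform bounds $\bar{\ell}^{\kappa_n}_v \leq 2\|\mathbf{w}_0^*\|_\infty$ and $\bar{h}^{\kappa_n}_v \leq 2\|\mathbf{h}_0^*\|_\infty$, so the total perimeter of the good rectangles is bounded by some constant $C = C(|\mathcal{G}_0|, \mathbf{w}_0^*, \mathbf{h}_0^*)$ independent of $n$. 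Then $\omega_\alpha(B_n^\varepsilon) \leq 4C\varepsilon$. Meanwhile $\omega_\alpha(A_n) = \sum_{v \in \mathcal{H}_0} \bar{\ell}^{\kappa_n}_v \bar{h}^{\kappa_n}_v \to 0$ since $\bar{h}^{\kappa_n}_v \to 0$ for $v \in \mathcal{H}_0$ by definition. Choosing $\varepsilon < \eta/(8C)$ and $n$ large enough that $\omega_\alpha(A_n) < \eta/2$ gives the first inequality.

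For the second inequality, I interpret $\mu_*^s(S^\varepsilon_{\kappa_n})$ as $\mu_*^s(\Phi_s(S^\varepsilon_{\kappa_n})) = \mu_s(S^\varepsilon_{\kappa_n})$ via the homeomorphism $\Phi_s$, so the goal becomes $\mu_s(S \setminus S^\varepsilon_{\kappa_n}) \leq \eta$. Writing $\mu_s = s\omega_\alpha + (1-s)\mu$, the $\omega_\alpha$ contribution is already controlled by the previous paragraph, so it remains to bound $\mu(A_n)$ and $\mu(B_n^\varepsilon)$. Using the local product decomposition $\mu = \Upsilon \times \mathrm{Leb}_y$ (valid because $\mu$ is invariant under the vertical flow), each $\mu(R_v^n)$ equals $\Upsilon(L^\alpha_{\bar{\ell}^{\kappa_n}_v}) \cdot \bar{h}^{\kappa_n}_v$. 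The probability normalization $\sum_v \Upsilon(L^\alpha_{\bar\ell^{\kappa_n}_v})\,\bar{h}^{\kappa_n}_v = 1$ combined with the uniform lower bound $\bar{h}^{\kappa_n}_v \geq \tfrac{1}{2} \min_{v \in \mathcal{G}_0} w_*^-(v)$ for $v \in \mathcal{G}_0$ yields a uniform upper bound on $\Upsilon(L^\alpha_{\bar{\ell}^{\kappa_n}_v})$ for $v \in \mathcal{G}_0$. This immediately handles the horizontal strips inside $B_n^\varepsilon$, giving a bound of order $\varepsilon$.

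The two remaining contributions, namely $\mu(A_n)$ and the $\mu$-mass of the vertical strips of width $\varepsilon$ inside $B_n^\varepsilon$, are handled simultaneously by exploiting the refining tower structure. As $n$ increases, the partitions $\{R_v^n\}_{v}$ refine the one below (up to measure zero, by the cutting-and-stacking description in \S\ref{subsec:SirfDiag}) and generate the Borel $\sigma$-algebra on $S$. Consequently $\mu(A_n) \to 0$, because $A_n$ is a nested-like sequence of sets whose $\omega_\alpha$-mass tends to $0$ while $\mu$ is non-atomic (vertical minimality) and $\sigma$-additive. For the vertical strips, I use that these strips are finite unions of narrow sub-towers of the next levels of the Bratteli diagram, whose heights are bounded and whose widths $\Upsilon(L^\alpha_\varepsilon)$ can be made uniformly small by choosing $\varepsilon$ small, again via the normalization $\mu(S) = 1$ and the fact that $\Upsilon$ is non-atomic on each transversal.

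The main obstacle is precisely this last step: obtaining a uniform (in $n$ and in position) upper bound on $\Upsilon(L^\alpha_\varepsilon)$ as $\varepsilon \to 0$, since $\mu$ is a priori only assumed to be an invariant probability measure with no uniform regularity. I would resolve it by combining the refining Bratteli-Vershik partition (which realizes every horizontal sub-segment as a union of sub-towers of appropriately deep levels of $\mathcal{B}$), the uniform height bounds from the convergence $\sigma^{\kappa_n}(\mathcal{B}) \to \mathcal{B}^*$, and the global mass constraint $\mu(S) = 1$; together these force the $\mu$-mass of any sufficiently narrow vertical strip inside a good rectangle to be uniformly $O(\varepsilon)$ for small enough $\varepsilon$. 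Choosing $\varepsilon$ smaller than a suitable threshold depending only on $\eta$, $\mathbf{w}_0^*$, $\mathbf{h}_0^*$, and $s$ then delivers the second inequality uniformly in $n$.
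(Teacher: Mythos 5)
Your proof of the first inequality is correct and is essentially the argument the paper already gives when it introduces $\varepsilon(\eta,n)$ just after (\ref{eqn:GoodSets}): the good rectangles have perimeter bounded uniformly in $n$ because $\mathbf{w}_0^{\kappa_n}\to\mathbf{w}_0^*$ and $\mathbf{h}_0^{\kappa_n}\to\mathbf{h}_0^*$, and the rectangles indexed by $\mathcal{H}_0$ lose $\omega_\alpha$-area. (A small caveat: $A_n$ does not depend on $\varepsilon$, so ``$n$ large enough that $\omega_\alpha(A_n)<\eta/2$'' does not literally give the bound \emph{for all} $n$; this is harmless since only large $n$ matter in the application, but it should be said.)

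The second inequality is where the content of the lemma lies, and the two steps you rely on there do not hold up. First, the claim that $\mu(A_n)\to 0$ ``because the $\omega_\alpha$-mass of $A_n$ tends to $0$ while $\mu$ is non-atomic and $\sigma$-additive'' is a non sequitur: $\mu$ is singular with respect to $\omega_\alpha$, the sets $A_n$ are not nested (the towers are reshuffled at each level $\kappa_n$), and there is no decreasing intersection to which continuity of measure could be applied. Writing $u^+$ for the weight function corresponding to $\mu$ via Remark \ref{rem:measures}, the $\mu$-mass of the tower over $v\in\mathcal{H}_0$ at level $\kappa_n$ is governed by $u^+(v)h_v^{\kappa_n}$, whereas the hypothesis $\bar h_v^{\kappa_n}\to 0$ constrains only $w^+(v)h_v^{\kappa_n}$; nothing you cite rules out $\mu(A_n)\geq c>0$ for all $n$. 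Second, the uniform $O(\varepsilon)$ bound on $\Upsilon(L^\alpha_\varepsilon(p))$ --- which you correctly identify as the main obstacle --- does not follow from the ingredients you invoke. The normalization $\mu(S)=1$ together with the lower bound on the heights over $\mathcal{G}_0$ bounds $\Upsilon$ of the \emph{entire} horizontal side of a good rectangle, but says nothing about how that mass is distributed along the side; a singular invariant measure may concentrate its transverse mass arbitrarily close to the left edge of a tower, and the uniformity you need is over all $n$ and all base points, which is precisely where such concentration can occur (the sets of continuations filling the leftmost $\varepsilon$ of the base of a level-$\kappa_n$ tower sit at ever deeper levels of the diagram as $n$ grows, and tail-invariance fixes their measure only up to the unknown conditional distribution of $u^+$ there). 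Non-atomicity gives $\Upsilon(L^\alpha_\varepsilon(p))\to 0$ for each fixed $n$ and $p$, not uniformly. So the proposal, as written, does not prove the lemma. For comparison, the paper's own proof is a one-sentence appeal to the lower bound (\ref{eqn:vertLength}), the exhaustion property of the sets $S^\varepsilon_n$, and the convergence of the geometry along $\kappa_n$; it does not proceed through your decomposition, and in particular the quantitative non-concentration of the singular measure $\mu$ near $\bigcup_{i\in\mathcal{G}_0}\partial R_i^n$ and on the degenerate towers is exactly the point that any complete argument must supply.
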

\begin{proof}
By (\ref{eqn:vertLength}), the fact that the sets $S^\varepsilon_n$ are an exhaustion of $S(\sigma^n(\mathcal{B},w^\pm,\leq_{r,s}))$, and that the geometry of the sequence of surfaces $S(\sigma^{\kappa_n}(\mathcal{B},w^\pm,\leq_{r,s}))$ converges, the result follows.
\end{proof}

We now apply Theorem \ref{thm:integrability} to the surface $(S_\mathcal{B},\alpha_s)$. Let $\eta>0$ and choose $\varepsilon >0$ so that the conclusions of Lemma \ref{lem:2areas} hold. Consider the sequences $\kappa_n\rightarrow\infty$ and $t_{\kappa_n}\rightarrow\infty$ obtained in \S \ref{subsec:proof1}. We consider the sets $S^\varepsilon_{\kappa_n}$ as subsets of $g_{t_{\kappa_n}}(S_\mathcal{B},\alpha_s)$. By (\ref{eqn:vertLength}), we can now measure the geometry of these subsets using the geometry of $g_{t_{\kappa_n}}S(\mathcal{B},w^\pm,\leq_{r,s})$. Note that
\begin{enumerate}
\item For $n$ large enough, a subset of the surface $g_{t_{\kappa_n}}(S_\mathcal{B},\alpha_s)$ of area arbitrarily close to 1 can be represented as $|\mathcal{G}_0|$ rectangles (whose heights are converging to the non-zero entries of $\mathbf{h}_0^*$ since we have not changed the vertical lengths) with identifications given on the boundaries of the rectangles;
\item By Proposition \ref{prop:shift} and (\ref{eqn:vertLength}), for $n$ large enough, the widths of each of these $|\mathcal{G}_0|$ rectangles is bounded below by $\frac{s}{2}$ times the width of the rectangles corresponding to the surface $g_{t_{\kappa_n}}S(\mathcal{B},w^\pm,\leq_{r,s})$ and, since the area of the surface is 1 and the heights of rectangles are bounded, the widths of the rectangles are also bounded. Thus, the diameter of $S^\varepsilon_{\kappa_n}$ with respect to the flat metric on $g_{t_{\kappa_n}}(S_\mathcal{B},\alpha_s)$ is bounded above by $s^{-1}$ times the diameter of each rectangle with respect to the flat metric on $g_{t_{\kappa_n}}S(\mathcal{B},w^\pm,\leq_{r,s})$;
\item By (\ref{eqn:vertLength}) and Lemma \ref{lem:pathDist}, for any $z_i$ and $z_j$ in different rectangles of $g_{t_{\kappa_n}}(S_\mathcal{B},\alpha_s)$, there exists a path $\gamma:[0,1]\rightarrow g_{t_{\kappa_n}}(S_\mathcal{B},\alpha_s)$ with $\gamma(0) = z_i$ and $\gamma(1) = z_j$ and 
$$ \min_{\tau\in[0,1]}\mbox{dist}_{\kappa_n, s}(\gamma(\tau), \Sigma) \geq s\cdot \delta_{\varepsilon,\Delta}, $$
where $\mbox{dist}_{\kappa_n, s}$ is the distance with respect to the flat metric on $g_{t_{\kappa_n}}(S_\mathcal{B},\alpha_s)$ and $\delta_{\varepsilon,\Delta}$ is the quantity obtained from Lemma \ref{lem:pathDist} for the surface $g_{t_{\kappa_n}}S(\mathcal{B},w^\pm,\leq_{r,s})$.
\end{enumerate}
As such, for the surface $(S_\mathcal{B},\alpha_s)$ we choose $\mathcal{D}_{t_{\kappa_n}}^i = \frac{4(\|\mathbf{w}_0^*\|_\infty+ \|\mathbf{h}_0^*\|_\infty)}{s}$, $\delta_{t_{\kappa_n}} = s\cdot \delta_{\varepsilon,\Delta}$ and $\varepsilon(t_{\kappa_n}) = s\cdot \varepsilon$. We can now apply Theorem \ref{thm:integrability}: with the chosen geometric quantities for the sequence of times $t_{\kappa_n}$ we get that
$$\sum_{n>0} \left( \varepsilon(t_{\kappa_n})^{-2}\sum_{i=1}^{|\mathcal{G}_0|}\mathcal{D}_{t_{\kappa_n}}^i + \frac{|\mathcal{G}_0|-1}{\delta_{t_{\kappa_n}}}\right)^{-2} =   
\sum_{n>0} \left( \frac{4|\mathcal{G}_0|(\|\mathbf{w}_0^*\|_\infty+ \|\mathbf{h}_0^*\|_\infty)}{s^3\varepsilon^2} + \frac{|\mathcal{G}_0|-1}{s\delta_{\varepsilon,\Delta}}\right)^{-2} = \infty.$$
By the same argument which uses (\ref{eqn:intervalBnds}) and (\ref{eqn:bigDivergence}), we see that (\ref{eqn:integrability2}) holds for $(S_\mathcal{B},\alpha_s)$, making the vertical flow ergodic. But this contradicts that the Lebesgue measure $\omega_{\alpha_s}$ on $(S_\mathcal{B},\alpha_s)$ is a convex combination of ergodic measures. Therefore, the Lebesgue measure is the unique probability measure on $S(\mathcal{B},w^\pm,\leq_{r,s})$ which is invariant by the vertical flow.
\subsection{An example}
\label{subsec:example}
In this section we construct a family of examples which show the fact that the orbit of a Bratteli diagram $\mathcal{B}$ can have an accumulation point $\sigma^{n_k}(\mathcal{B})\rightarrow \mathcal{B}^*$ with the positive part of $\mathcal{B}^*$ being transitive and admiting a unique invariant non-atomic probability measure but $\mathcal{B}$ does not admit \textbf{any} non-atomic invariant probability measure.

For natural numbers $n,p$, let 
$$ M(p,n) = \left( \begin{array}{cc}
p&n \\ 0&1 \end{array}\right). $$
Let $\mathcal{B}^*$ be a Bratteli diagram $\mathcal{B}^*$ with transition matrix $\mathcal{F}_k = M(3,1)$ for all $k\in\mathbb{Z}\backslash \{0\}$. The positive part of $\mathcal{B}^*$, under the right orderings, corresponds to Chacon's transformation (see \cite[\S 6.2]{LT}), which is known to be uniquely ergodic. The positive part of $\mathcal{B}^*$ does not admit a unique invariant probability weight function because there is an atomic weight function supported on the ``rightmost path'' of the positive part of the Bratteli diagram $\mathcal{B}^*$. But there is a unique \textbf{non-atomic} probability weight function defined on the positive part of $\mathcal{B}^*$, which reflects the fact that the Chacon transformation is uniquely ergodic.

Fix $p\in\mathbb{N}$ and let $\bar{n} = (n_1,n_2,n_3,\dots)\in \mathbb{N}^\mathbb{N}$. We will now define a Bratteli diagram $\mathcal{B}(\bar{n})$ by specifying the transition matrices $\mathcal{F}_k$, $k\in\mathbb{Z}\backslash \{0\}$. For $k<0$, $\mathcal{F}_k = \mathrm{Id}_2$. For $k\in\mathbb{N}$, 
$$\mathcal{F}_k = \left\{ \begin{array}{ll} 
M(p,n_i)&\mbox{ if }k = (i+1)^2-1 \\ M(p,1)&\mbox{ otherwise.} \end{array}\right.$$
With $p=3$, for any $\bar{n}\in\mathbb{N}^\mathbb{N}$, we have that along the subsequence $k_i = i(i+1)$, $i\in\mathbb{N}$, we have that $\sigma^{k_i}(\mathcal{B})\rightarrow \mathcal{B}^*$. Moreover, we have that the positive part of $\mathcal{B}(\bar{n})$ is transitive: in fact, all but one infinite path belongs to a tail-equivalence class which is dense in $X_{\mathcal{B}^+}$. The exceptional path is the ``rightmost'' path running down the right side of the diagram. This path forms a periodic component containing one element. As such, it supports an atomic probability weight function.

Suppose that $w^+$ is a non-atomic weight function for the positive part of $\mathcal{B}(\bar{n})$ and normalize it so that $w^+(v_1) = 1$, where $v_1\in\mathcal{V}(\bar{n})_0$ is the vertex with $p$ edges coming out of it. For any $k\in\mathbb{N}$, let $v_1^k\in\mathcal{V}(\bar{n})_k$ be the vertex with $p$ edges emanating from it. Then it is straight forward to work out that any weight function $w^+$ must satisfy
$$w^+(v_1^k) = \sum_{i=0}^k  \frac{a_i}{p^i},$$
where $a_i = n_i$ if $i = (j+1)^2-1$ for some $j\in\mathbb{N}$ and otherwise $a_i = 1$. Then it follows that if the $n_i$ grow fast enough, the weight function $w^+$ will not be a finite weight function (it will be forced to have $w^+(v_2) = \infty$, for $v_2\in\mathcal{V}(\bar{n})_0$ which is not $v_1$). This, for example, holds for $n_i = p^{(i+1)^2-1}$. Thus there are many Bratteli diagrams $\mathcal{B}$ such that they have an accumulation point along a subsequence of the shift map in $\mathcal{B}^*$, the Bratteli diagram corresponding to the (uniquely ergodic) Chacon transformation, and which do not admit a finite, non-atomic invariant probability measure.

We can still deduce that any adic transformation defined on the positive part of $\mathcal{B}(\bar{n})$ is ergodic, but somewhat indirectly: it suffices to note that $\mathcal{B}(\bar{n})$ it is a tower over the $p$-adic odometer, which is uniquely ergodic (see also \cite[\S 8.2.5]{LT}).

\appendix
\section{Fibonacci Bratteli diagrams}
\label{sec:fib}

\begin{figure}[t]
  \centering
  \includegraphics[width=0.8\textwidth]{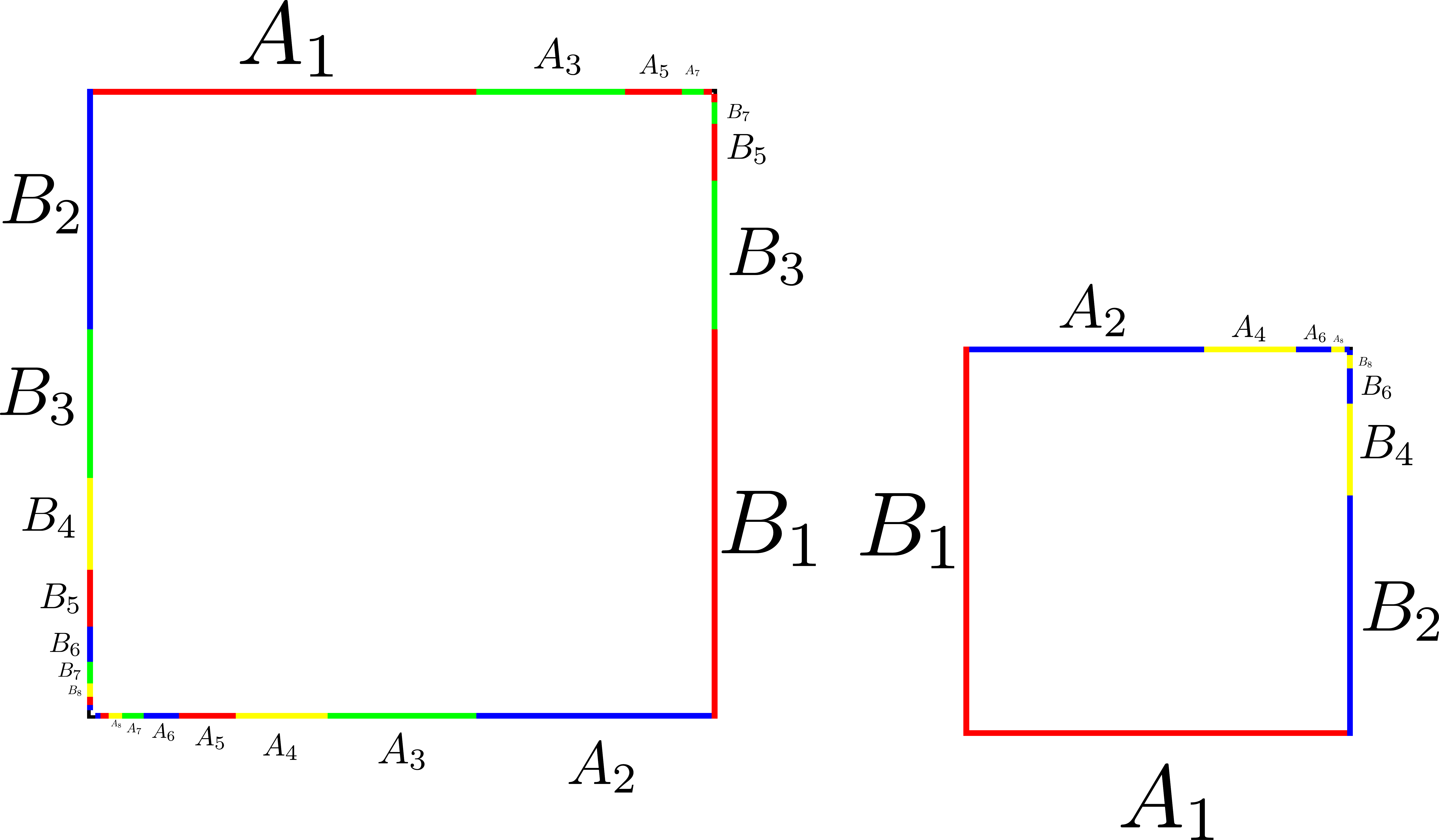}
  \caption{The flat surface associated to the stationary Bratteli diagram given by the Fibonacci substitution. The interiors of the edges with the same $A_i$ label are identified and so are the ones with the same $B_i$ labels.}
  \label{fig:fib}
\end{figure}

In this section I will work out an explicit construction of a surface from a particular Bratteli diagram. The hope is that this example will be illuminating and help the reader understand the different definitions and how they fit together.

When drawing surfaces from Bratteli diagrams, the easiest ones to approach are diagrams which are stationary (see Definition \ref{def:stationary}). In fact, by a procedure known as telescoping, for the purposes here, any stationary bi-infinite Bratteli is equivalent to one which is given by a single matrix. In the example here I will consider the stationary Bratteli diagram which is given by the single matrix
$$ \mathfrak{F} = \left( \begin{array}{cc}
1&1 \\ 1&0 \end{array}\right),$$
which is the matrix corresponding to the so-called Fibonacci substitution. The bi-infinite Bratteli diagran $\mathcal{B}$ is defined by the transition matrices $\mathcal{F}_k = \mathfrak{F}$ for all $k\in \mathbb{Z}\backslash \{0\}$. Since $|\mathcal{V}_k| = 2$ for all $k\in\mathbb{Z}$ we denote by $v_{i}^k$, $k\in\mathbb{Z}$, $i\in \{1,2\}$ the $i^{th}$ vertex in $\mathcal{V}_k$.

The reason why depicting surfaces from Bratteli diagrams is significantly easier when the diagram is stationary is that the weight function $w$ comes from a Perron-Frobenius eigenvector $v$ of the single matrix defining the matrix. For $\mathfrak{F}$ it is $v = (\frac{1+\sqrt{5}}{2} , 1)$, and so we have for this weight function that $w^\pm(v_1^0) = \frac{2}{1+\sqrt{5}}$ and $w^\pm(v_2^0) = \frac{1}{3+\sqrt{5}}$. Moreover, for edges coming out of $v_1^k$ their corresponding weights are $\frac{\sqrt{5}-1}{2}$ and $\frac{3-\sqrt{5}}{2}$, respectively, for all $k$.

\begin{wrapfigure}{l}{0.25\textwidth}
  \centering
  \includegraphics[width=0.25\textwidth]{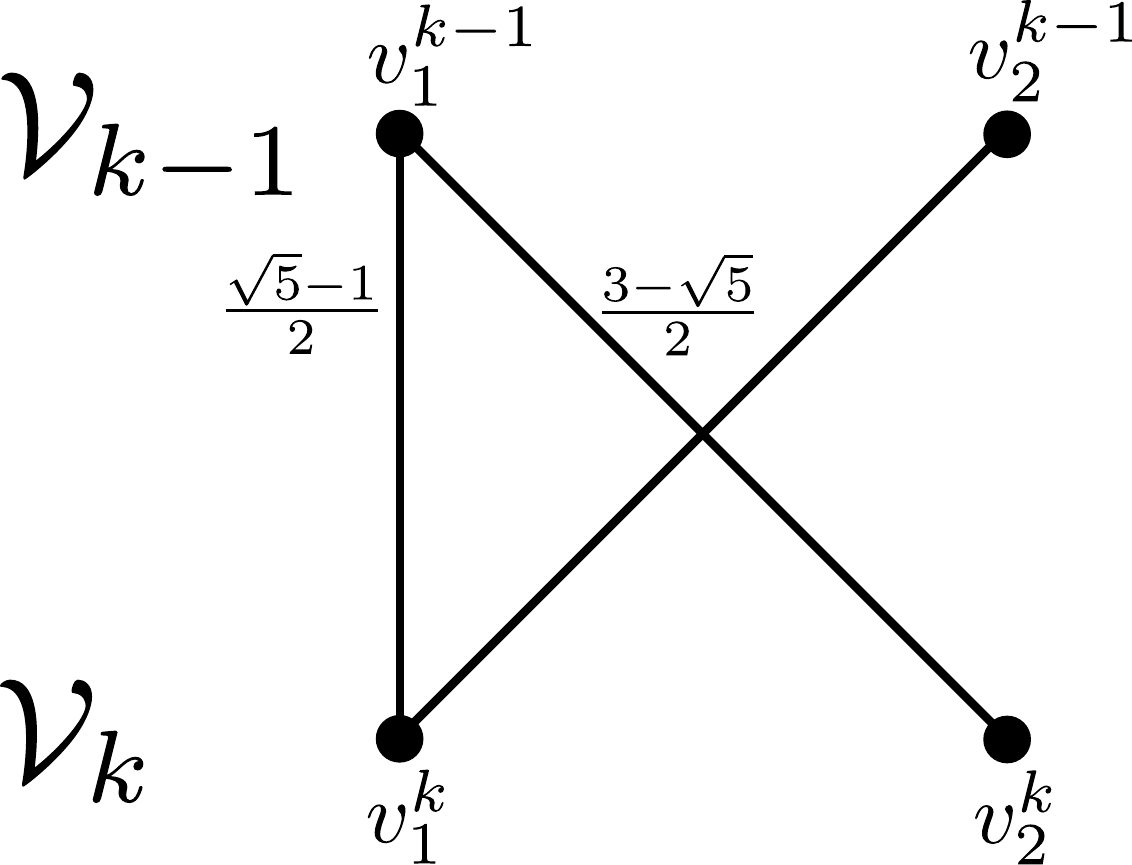}
\end{wrapfigure}

We can now move to the orderings. The most ``natural'' ordering is the one which one can read from the depiction of $\mathfrak{F}$ as a level of the Bratteli diagram (see the image on the left). That is, for $\leq_r$, for every $k\in\mathbb{Z}$, we order the two edges going into $v_1^k$ as they appear in the figure on the left, that is, from left to right. For $\leq_s$, for any $k\in\mathbb{Z}$, we order the edges coming out of $v_1^{k-1}$ as they come out as depicted on the image on the left, that is, from left to right. So now we have defined our orderings $\leq_{r,s}$.

Now that the weight functions $w^\pm$ and orderings $\leq_{r,s}$ are in place, we can construct the flat surface $S(\mathcal{B},w^\pm, \leq_{r,s})$. Recall that to do this we first need to define an interval exchange transformation (on infinitely many intervals) through the process of cutting and stacking (\cite[\S 6]{LT}). Figure \ref{fig:CAS} depicts the process described below.

\begin{figure}[t]
  \centering
  \includegraphics[width=0.85\textwidth]{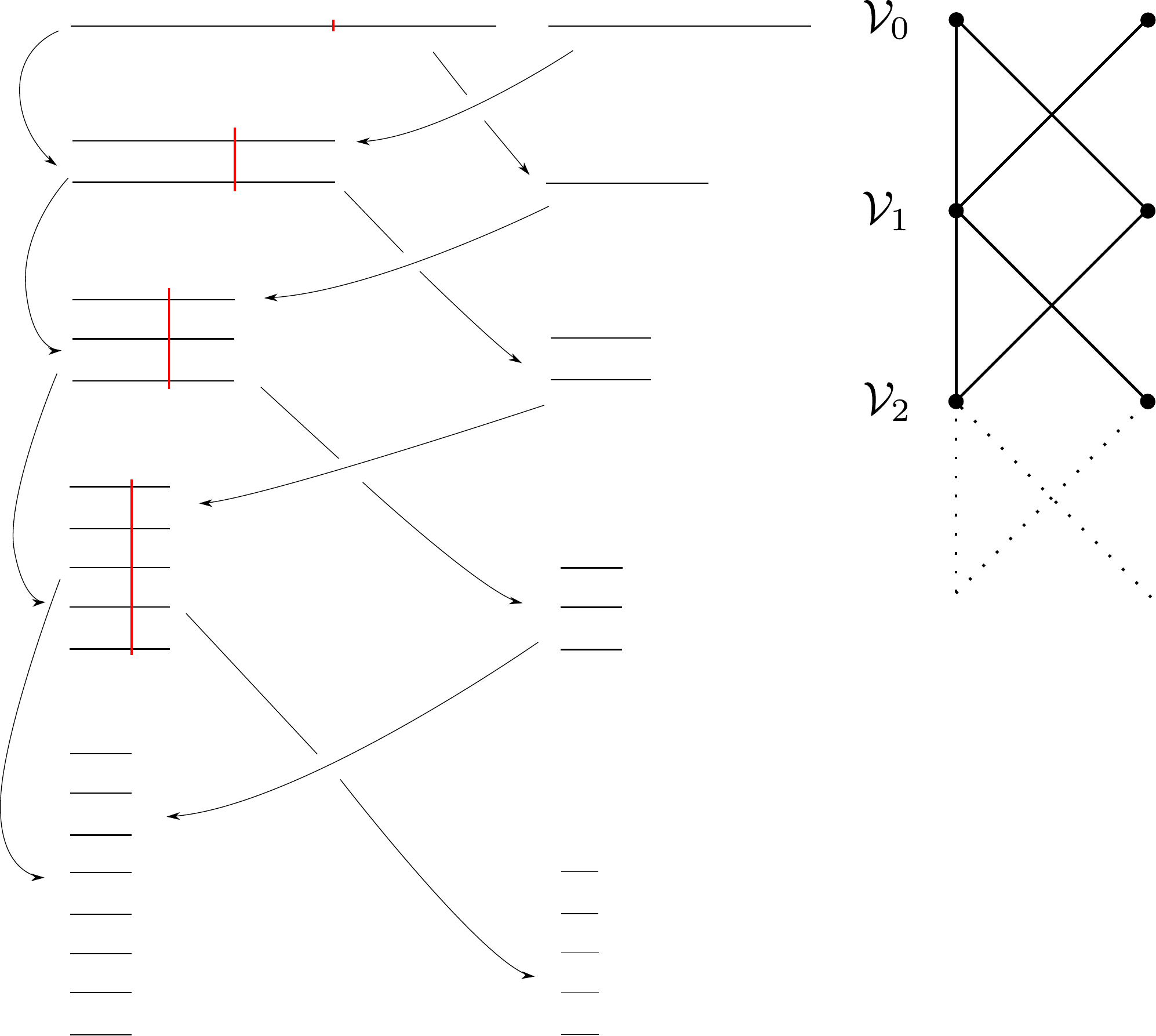}
  \caption{On the right, the positive part of the Bratteli diagram given by the matrix of the Fibonacci substitution. On the left, the cutting and stacking process given by this Bratteli diagram: the red lines represent the cutting of the stacks. At stage $k$, there is a map $T_k$ whose domain is the interior of the intervals of the stacks at stage $k$ who are not at the very top. The map $T_k$ maps the $i^{th}$ level of this stack isometrically to the one above it. The pointwise limit $T^* = \lim_{k\rightarrow \infty}$ gives an interval exchange transformation on countably infinite intervals with countably infinite points of discontinuity. The surface constructed from the identifications given by $T^*$ is depicted in Figure \ref{fig:fib}.}
  \label{fig:CAS}
\end{figure}

Starting with the positive part of $\mathcal{B}$, the cutting-and-stacking procedure is an iterative procedure which starts as follows. The $0^{th}$ stage of the procedure consists of having two intervals of length $w^+(v_1^0)$ and $w^+(v_2^0)$, respectively. To get to the first stage of this procedure, we cut the interval associated with $v_1^0$ into two intervals of widths $w^+(v_1^0)\frac{\sqrt{5}-1}{2}$ and $w^+(v_1^0)\frac{3-\sqrt{5}}{2}$, respectively. We now create two stacks which will be identified with the vertices $v_1^1$ and $v_2^1$ as follows: the first one (corresponding to $v_1^1$) consists of two intervals which represent the two edges coming into $v_1^1$. That is, this stack consists of the left subinterval of $v_1^0$ and the interval represented by $v_2^0$. The ordering $\leq_r$ mandates that in this stack the left subinterval of $v_1^0$ is placed under the interval representing $v_2^0$. The second tower, the one representing $v_2^1$, consists of the right subinterval of $v_1^0$, corresponding to the edge which goes from $v_1^0$ to $v_2^1$. We can now define a map $T_1$ whose domains is the interiors of intervals in all stacks which are not at the top. Since only the first stack so far has more than one interval, the domain of the map is the interior of the left subinterval of $v_1^0$, that is, the subinterval representing the edge going from $v_1^0$ to $v_1^1$. The map send the subinterval on the bottom of the first stack isometrically to the interval directly above it.

We now proceed iteratively: The $k^{th}$ stage of the procedure consists of having two stacks of intervals of length $w^+(v_1^k)$ and $w^+(v_2^k)$, respectively. To get to the $k+1^{st}$ stage of this procedure, we cut the stack associated with $v_1^k$ into two stacks of widths $w^+(v_1^k)\frac{\sqrt{5}-1}{2}$ and $w^+(v_1^k)\frac{3-\sqrt{5}}{2}$, respectively. We now create two stacks which will be identified with the vertices $v_1^{k+1}$ and $v_2^{k+1}$ as follows: the first one (corresponding to $v_1^{k+1}$) consists of intervals which came from the left substack of $v_1^k$ placed under the intervals coming from the stack $v_2^k$. The second stack, the one representing $v_2^{k+1}$, consists of the right substack of $v_1^k$, corresponding to the edge which goes from $v_1^k$ to $v_2^{k+1}$. We can now define a map $T_{k+1}$ whose domains is the interiors of intervals in all stacks which are not at the top. This map will coincide with $T_{k}$ wherever their domains coincide, and it will be an extension of it. The map sends any subinterval which is not at the top of the stack isometrically to the interval directly above it. This procedure gives a sequence of maps $\{T_k\}_k$ defined as local isometries on various subsets of $[0,1]$. Let $T^* = \lim_{k\rightarrow \infty} T_k$ be the pointwise limit of these maps (wherever this is defined).

Consider now $\bar{S} = [0,w^+(v_1^0)]^2\sqcup [w^+(v_1^0), 1]^2$, the disjoint union of two rectangles. The map $T_k$ will define identifications on the boundary of $\bar{S}$. Indeed, if $(x,y)\in \bar{S}$ is a point on the top edge and $x$ is not a point of discontinuity of $T^*$, we identify $(x,y)\sim (T^*(x),y')$, where $y'$ is either $0$ or $w^+(v_1^0)$, depending on whether $T^*(x)< w^+(v_1^0)$ or not. Since $\mathcal{B}$ is stationary, we also use $T^*$ to give identifications to the left/right edges of $\bar{S}$ in the same way. As such, the surface $S(\mathcal{B},w^\pm, \leq_{r,s})$ is defined to be $(\bar{S}/\sim)- \Sigma$, where $\sim$ is the identification on $\partial \bar{S}$ given by $T^*$ and explained above, and $\Sigma\subset \partial \bar{S}$ is a small subset given by points of discontinuity of $T^*$. 

\begin{figure}[t]
  \centering
  \includegraphics[width=0.9\textwidth]{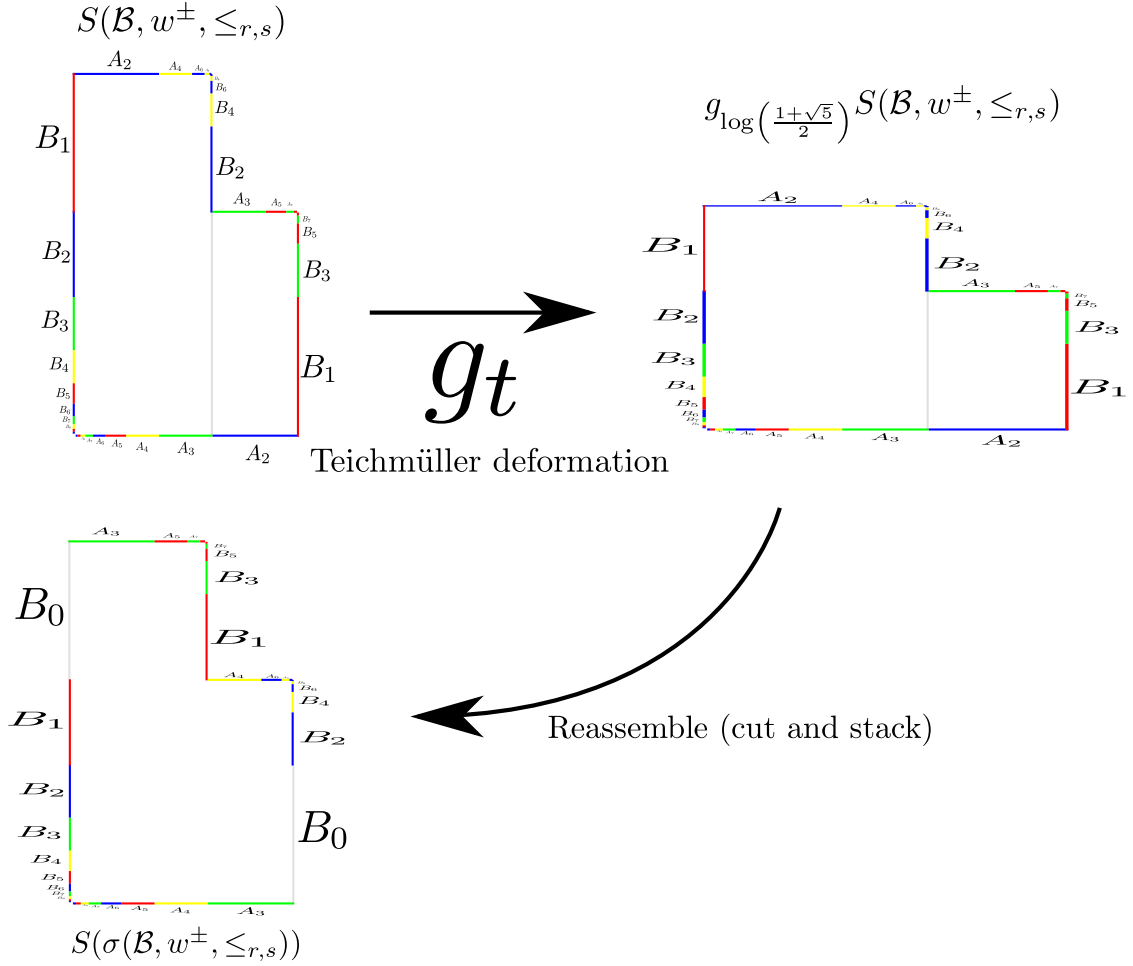}
  \caption{Depicting Proposition \ref{prop:shift}: the relationship between the Teichm\"uller deformation and the shifting operation on the Bratteli diagram starting with the surface in Figure \ref{fig:fib}.}
  \label{fig:deform}
\end{figure}

It remains to depict the relationship between the Teichm\"uller deformation of $S(\mathcal{B},w^\pm, \leq_{r,s})$ and the shift on $(\mathcal{B},\leq_{r,s})$, that is, depict the content of Proposition \ref{prop:shift}. By (\ref{eqn:rTimes}) in this case (since $\mathcal{B}$ is stationary) we get that $t_k = k\log\left(\frac{1+\sqrt{5}}{2}\right)$ for all $k\in\mathbb{N}$. This is depicted in Figure \ref{fig:deform}.

\section{$|\mathcal{V}_k| = 1$ for infinitely many $k>0$ implies strict ergodicity}
\label{sec:strict}
Here I will briefly sketch the proof of the following statement:
\begin{proposition}
Let $B = (V,E)$ be a Bratteli diagram and suppose that $|V_k| = 1$ for infinitely many $k$. Then the tail equivalence relation is minimal and supports a unique invariant probability measure.
\end{proposition}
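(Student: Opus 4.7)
The plan is to handle minimality and unique ergodicity separately, both exploiting the basic observation that a level $V_k$ with $|V_k|=1$ acts as a \emph{bottleneck}: every infinite path in $X_B$ must pass through the unique vertex $v_k \in V_k$.

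For minimality, I would show directly that every tail-equivalence class is dense in $X_B$. Fix any $\bar{e} = (e_1,e_2,\dots) \in X_B$ and any cylinder set $C(f_1,\dots,f_n)$. Using the hypothesis, choose $k > n$ with $|V_k|=1$, and let $v_k$ denote the unique vertex at that level. Since every vertex in $V_n$ can be joined to $v_k$ by at least one path in $\mathcal{E}_{n,k}$ (because $r(\mathcal{E}_{n+1})=V_{n+1},\dots,r(\mathcal{E}_k)=\{v_k\}$), pick any path $(g_{n+1},\dots,g_k)$ from $r(f_n)$ to $v_k$. The infinite path $\bar{f} = (f_1,\dots,f_n,g_{n+1},\dots,g_k,e_{k+1},e_{k+2},\dots)$ is well-defined since $\bar{e}$ also passes through $v_k$, it lies in $C(f_1,\dots,f_n)$, and it agrees with $\bar{e}$ past level $k$, so $\bar{f} \in [\bar{e}]$. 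Hence $[\bar{e}]$ is dense in $X_B$ for every $\bar{e}$. To rule out periodic components, note that whenever a level above $v_k$ contains a vertex with two or more outgoing edges (which must happen for infinitely many $k$ unless $X_B$ is a single point), one can modify $\bar{e}$ at arbitrarily high levels to produce infinitely many distinct paths tail-equivalent to $\bar{e}$, so every tail class is infinite; in the degenerate case $|X_B|=1$ the statement is trivial.

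For unique ergodicity, I would use the bijection (Remark \ref{rem:measures}) between invariant probability measures and probability weight functions. Let $w$ be any probability weight function for $B$, and fix any $k$ with $|V_k|=1$. By condition (i) of the weight function definition, every path $(e_1,\dots,e_k)$ ending at $v_k$ has the same weight $w(s(e_1))\prod_{i=1}^{k}w(e_i)=w(v_k)$. The cylinders over all such paths partition $X_B$, so
\[
1 = \sum_{(e_1,\dots,e_k)\,:\,r(e_k)=v_k} w(s(e_1))\prod_{i=1}^k w(e_i) = N_k \cdot w(v_k),
\]
where $N_k$ is the (finite) number of paths from $V_0$ to $v_k$. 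Hence $w(v_k) = 1/N_k$ is forced. More generally, for any finite initial segment $(e_1,\dots,e_n)$ and any $k>n$ with $|V_k|=1$, summing the same identity over the paths that extend $(e_1,\dots,e_n)$ gives
\[
w(s(e_1))\prod_{i=1}^n w(e_i) = \frac{N(r(e_n),k)}{N_k},
\]
where $N(v,k)$ is the number of paths from $v$ to $v_k$. Since there are infinitely many such $k$, the right-hand side is determined purely by the combinatorics of $B$, so $w$ is uniquely determined on every cylinder, hence on $X_B$.

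The only genuinely delicate point is ensuring the bottleneck argument for unique ergodicity is valid regardless of the position $n$ of the cylinder: this just requires that one can always find some admissible $k>n$ with $|V_k|=1$, which is exactly the hypothesis. A small sanity check is also needed to confirm that the ratio $N(r(e_n),k)/N_k$ is independent of which bottleneck level $k$ is chosen (it is, since for two bottleneck levels $k<k'$ one has $N_{k'}=M\cdot N_k$ and $N(v,k')=M\cdot N(v,k)$ for the same multiplier $M$ equal to the number of paths from $v_k$ to $v_{k'}$). Apart from this, the argument is essentially combinatorial counting and poses no real obstacle.
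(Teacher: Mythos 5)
Your proposal is correct, and the minimality half coincides with the paper's argument: both exploit the singleton level as a bottleneck through which every infinite path must pass. For unique ergodicity, however, you take a genuinely different and much more elementary route. You work directly with the correspondence between invariant probability measures and probability weight functions (Remark \ref{rem:measures}): tail-invariance forces all $N_k$ cylinders terminating at the unique vertex $v_k\in V_k$ to carry equal mass $1/N_k$, and summing over extensions then pins down the measure of every cylinder as $N(r(e_n),k)/N_k$, a quantity determined by the combinatorics of $B$ alone. The paper instead stays inside its flat-surface framework: it welds $B$ onto a trivial negative part, builds the surface $S(\mathcal{B},w^\pm,\leq_{r,s})$, observes that at the renormalization times $t_{k_i}$ associated to the singleton levels the deformed surface is a single square with boundary identifications, so that all geometric quantities in Theorem \ref{thm:integrability} are uniformly bounded and the integral diverges, deduces ergodicity of the vertical flow, and then upgrades to unique ergodicity via the convex-combination argument of \S \ref{subsec:upgrade}. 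Your argument is self-contained and purely combinatorial (and is essentially the standard proof of this classical fact), whereas the paper's proof is deliberately heavier so as to illustrate the Masur-type criterion that drives the whole paper. The only loose end in your write-up is the degenerate situation where $X_B$ is finite but not a single point (then the unique tail class is a periodic component, so ``minimal'' fails under the paper's convention); the paper glosses over this as well, so it is not a substantive objection.
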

The proof of minimality is straight forward: let $x = (x_1,x_2,\dots)\in X_B$ be an infinite path starting at $V_0$. Being minimal means that for any other $x' = (x_1',x_2',\dots) \in X_B$, for any $k>0$, we have that there is a path $y = (y_1,y_2,\dots)$ such that $x_i = y_i$ or all $i\in\{1,\dots, k\}$ and $y_i = x_i'$ for all $i$ large enough. This is guaranteed by the fact that $|V_k| = 1$ for infinitely many $k>0$. So the tail equivalence relation is minimal.

That the tail equivalence is ergodic can be shown as follows. Since $|V_k| = 1$ for infinitely many $k$, for any specific choice of orders $\leq_{r,s}$ and weight function $w$, the weighted, ordered diagram $(B,w,\leq_{r,s})$ defines defines a cutting and stacking transformation of $[0,1]$ with one tower. Moreover, by letting $(\mathcal{B},w^\pm,\leq_{r,s})$ be the weighted, ordered Bratteli diagram whose positive part is $(B,w,\leq_{r,s})$ and whose negative part is given by the matrix $\mathcal{F}_k = \mathrm{Id}_{|V_0|}$ for all $k<0$, we can construct a flat surface $S(\mathcal{B},w^\pm, \leq_{r,s})$ of area 1 as explained in \S \ref{subsec:SirfDiag}. Denote by $k_i\rightarrow \infty$ a subsequence satisfying $|V_{k_i}| = 1$ for all $i$.

Now we deform the surface $S(\mathcal{B},w^\pm, \leq_{r,s})$ using the Teichm\"uller deformation. In particular, we consider the deformed surfaces $g_{t_{k_i}}S(\mathcal{B},w^\pm, \leq_{r,s})$, where $t_{k_i}$ are the times given by (\ref{eqn:rTimes}). By Proposition \ref{prop:shift}, the surfaces $g_{t_{k_i}}S(\mathcal{B},w^\pm, \leq_{r,s})$ can be represented by a square along with identifications on its boundary. As such, all geometric quantities relevant in Theorem \ref{thm:integrability} are bounded independently of $k_i$. An estimate equal to (\ref{eqn:bigDivergence}) can be therefore derived and thus, through Theorem \ref{thm:integrability}, get ergodicity for the vertical flow on $S(\mathcal{B},w^\pm, \leq_{r,s})$, which implies ergodicity for the measure defined by the weight function $w$ for the tail equivalence on $X_B$. To get unique ergodicity, the argument used in \S \ref{subsec:upgrade} works, and so we get strict ergodicity.
\bibliographystyle{amsalpha}
\bibliography{biblio}

\end{document}